\documentclass[12pt,a4paper]{article}

\usepackage{amssymb}
\usepackage{amsthm}
\usepackage{amsmath}
\usepackage[colorlinks=true,allcolors=black]{hyperref}
\usepackage{enumerate}

\newtheorem{theorem}{Theorem}[section]

\newtheorem{lemma}[theorem]{Lemma}
\newtheorem{proposition}[theorem]{Proposition}

\setlength{\textwidth}{6.4in}
\setlength{\textheight}{9.8in}
\setlength{\topmargin}{-0.5in}
\setlength{\headheight}{0pt}
\setlength{\oddsidemargin}{0pt}
\setlength{\evensidemargin}{0pt}

\begin{document}

\title{Characterizing finite nilpotent groups associated with a graph theoretic equality}
\author{Ramesh Prasad Panda \and Kamal Lochan Patra \and Binod Kumar Sahoo}

\date{}

\maketitle

\begin{abstract}
The power graph of a group is the simple graph whose vertices are the group elements and two vertices are adjacent whenever one of them is a positive power of the other. We characterize the finite nilpotent groups whose power graphs have equal vertex connectivity and minimum degree.

\vskip .5cm

\noindent {\bf Key words.}   Nilpotent group, Power graph, Vertex connectivity, Minimal degree\\
\noindent {\bf AMS subject classification.} 20D15, 05C25, 05C40

\end{abstract}

\section{Introduction}

There are various graphs associated with groups which have been studied in the literature, e.g., Cayley graph, commuting graph, generating graph, prime graph. An interesting problem in this respect is to investigate the relation between a group and the associated graph. For instance, an algebraic characterization was given in \cite{Doty} for the generating sets of Cayley graphs with neighbor connectivity one for a class of groups including abelian groups. Whereas, in \cite{Gill}, the finite quasisimple groups with perfect commuting graphs were classified. 

The notion of the directed power graph of a group was introduced by Kelarev and Quinn in \cite{kel-2000} (also, see \cite{kel-2002} for a semigroup). The underlying undirected graph, simply termed as power graph, was first considered by Chakrabarty et al. in \cite{CGS-2009}. The {\it power graph} of a group $G$, denoted by $\mathcal{P}(G)$, is the simple graph with vertex set $G$ and two vertices $u$ and $v$ are adjacent if $u = v^k$ or $v = u^l$ for some positive integers $k$ and $l$.

By considering the aforementioned problem for power graphs, Cameron \cite{Cameron} proved that if the power graphs of two finite groups are isomorphic, then they have the same numbers of elements of each order. Curtin and Pourgholi \cite{curtin} showed that among all finite groups of a given order, the cyclic
group of that order has the maximum number of edges in its power graph.
Moreover, in \cite{MA2015}, Ma and Feng classified all finite groups whose power graphs are uniquely colorable, split or unicyclic.
In \cite{P-IM}, Panda characterized the finite noncyclic groups of prime exponent in terms of the edge connectivity of their power graphs.
In the present paper, working further on the relation between a group and its power graph, we characterize the finite nilpotent groups whose power graphs have equal vertex connectivity and minimum degree.

\subsection{Graphs and groups}

Let $\Gamma$ be a simple graph with vertex set $V$. A vertex $v\in V$ is called a {\it cut-vertex} of $\Gamma$ if the induced subgraph of $\Gamma$ on $V\setminus\{v\}$ is  disconnected. The {\it vertex connectivity} of $\Gamma$, denoted by $\kappa(\Gamma)$, is the minimum number of vertices whose deletion gives a disconnected or trivial subgraph of $\Gamma$. The latter case arises only when $\Gamma$ is a complete graph. Analogously, the {\it edge connectivity} of $\Gamma$, denoted by $\kappa'(\Gamma)$, is the minimum number of edges whose deletion gives a disconnected subgraph of $\Gamma$.
For $v\in V$, we denote by $\deg(v)$ the degree of $v$ in $\Gamma$. The {\it minimum degree} of $\Gamma$, denoted by $\delta(\Gamma)$, is defined by $\delta(\Gamma)=\min \{\deg(v):v\in V\}$.
It is known that $\kappa(\Gamma)\leq \kappa'(\Gamma)\leq \delta(\Gamma)$ and that $\kappa'(\Gamma)=\delta(\Gamma)$ if the diameter of $\Gamma$ is at most $2$.

Let $G$ be a group with identity element $e$. The order of an element $x\in G$ is denoted by $o(x)$. Recall that $G$ is said to be {\it nilpotent} if its lower central series $G=G_1 \geq G_2 \geq G_3 \geq G_4 \geq \cdots $ terminates at $\{e\}$ after a finite number of steps, where $G_{i+1}=[G_i, G]$ for $i\geq 1$. If $G$ is a finite group, then $G$ being nilpotent is equivalent to any of the following statements:
\begin{enumerate}[\rm(a)]
\item Every Sylow subgroup of $G$ is normal (equivalently, there is a unique Sylow $p$-subgroup of $G$ for every prime divisor $p$ of $|G|$).
\item $G$ is the direct product of its Sylow subgroups.
\item For $x,y\in G$, $x$ and $y$ commute whenever $o(x)$ and $o(y)$ are relatively prime.
\end{enumerate}
Every abelian group is nilpotent. Also, every finite group of prime power order is nilpotent.
A cyclic subgroup $M$ of $G$ is called a {\it maximal cyclic subgroup} if it is not properly contained in any cyclic subgroup of $G$. We denote by $\langle x\rangle$ the cyclic subgroup of $G$ generated by an element $x\in G$ and by $\mathcal{M}(G)$ the collection of all maximal cyclic subgroups of $G$. If $G$ is finite, then every element of $G$ is contained in a maximal cyclic subgroup. This statement, however, need not hold in an infinite group; $(\mathbb{Q},+)$ is an example.
For every positive integer $n\geq 2$, the {\it dicyclic group} of order $4n$ is given by $Q_{4n}:=\langle x,y\mid x^{2n}=e, x^n=y^2, y^{-1}xy=x^{-1}\rangle$. This group has a unique element of order $2$, namely $y^2=x^n$. If $n$ is a power of $2$, then $Q_{4n}$ is known as a {\it generalized quaternion group}.

\subsection{Known results}

Let $G$ be a finite group. Observe that two distinct vertices $x,y\in G$ are adjacent in $\mathcal{P}(G)$ if and only if $x\in\langle y\rangle$ or $y\in\langle x\rangle$. We note that if $\langle x \rangle = \langle y \rangle$, then  $\deg(x) = \deg(y)$. Further, if $\langle x\rangle\in \mathcal{M}(G)$, then $\deg(x)=o(x)-1$. For a nonempty proper subset $A$ of $G$, $\mathcal{P}(A)$ denotes the induced subgraph of $\mathcal{P}(G)$ with vertex set $A$. In that case, for $x\in A$, the degree of $x$ in $\mathcal{P}(A)$ is denoted by $\deg_{\mathcal{P}(A)}(x)$. If $A=G\setminus\{e\}$, then $\mathcal{P}(A)$ is called the {\it proper power graph} of $G$, denoted by $\mathcal{P}^{\ast}(G)$. The following result was proved in \cite[Corollary 4.1]{MRS-JAA}.

\begin{proposition}[\cite{MRS-JAA}]\label{proper-power}
Let $G$ be a finite group of prime power order. Then $\mathcal{P}^{\ast}(G)$ is connected if and only if $G$ is either cyclic or generalized quaternion.
\end{proposition}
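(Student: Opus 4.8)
The plan is to reduce the proposition to the purely group-theoretic fact that a finite $p$-group has a \emph{unique} subgroup of order $p$ exactly when it is cyclic or (for $p=2$) generalized quaternion. Throughout write $|G|=p^n$ and recall that two distinct nonidentity elements $x,y$ are adjacent in $\mathcal{P}^{\ast}(G)$ if and only if $\langle x\rangle\subseteq\langle y\rangle$ or $\langle y\rangle\subseteq\langle x\rangle$. Since $p$ is the smallest order of a nonidentity element of $G$, a subgroup $\langle a\rangle$ of order $p$ is \emph{minimal}: for any $x\neq e$, if $a$ and $x$ are adjacent then necessarily $\langle a\rangle\subseteq\langle x\rangle$, because the alternative $\langle x\rangle\subseteq\langle a\rangle$ forces $\langle x\rangle=\langle a\rangle$, which again gives $\langle a\rangle\subseteq\langle x\rangle$.

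The central step is to identify the connected component of such an $a$. I claim it equals $C_a:=\{x\in G\setminus\{e\}:\langle a\rangle\subseteq\langle x\rangle\}$. Every element of $C_a$ is adjacent to $a$ (as $a\in\langle x\rangle$), so $C_a$ sits inside a single component. Conversely, I would show that lying in the component of $a$ forces $\langle a\rangle\subseteq\langle x\rangle$, by walking along an arbitrary path and using the following edge-propagation: if $\langle a\rangle\subseteq\langle x\rangle$ and $x$ is adjacent to $y\neq e$, then either $\langle x\rangle\subseteq\langle y\rangle$, whence $\langle a\rangle\subseteq\langle y\rangle$, or $\langle y\rangle\subseteq\langle x\rangle$, in which case $\langle y\rangle$ is a nontrivial subgroup of the cyclic $p$-group $\langle x\rangle$ and so contains its unique subgroup of order $p$, namely $\langle a\rangle$. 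Either way $\langle a\rangle\subseteq\langle y\rangle$, so the component of $a$ is exactly $C_a$.

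With this in hand both directions follow quickly. If $G$ has two distinct subgroups $\langle a\rangle,\langle b\rangle$ of order $p$, then $b\notin C_a$ (otherwise $\langle a\rangle\subseteq\langle b\rangle$ would force $\langle a\rangle=\langle b\rangle$), so $a$ and $b$ lie in different components and $\mathcal{P}^{\ast}(G)$ is disconnected. If instead $G$ has a unique subgroup $\langle z\rangle$ of order $p$, then every nontrivial cyclic subgroup $\langle x\rangle$, being a nontrivial $p$-group, contains a subgroup of order $p$, which must be $\langle z\rangle$; hence $z\in\langle x\rangle$ for all $x\neq e$, so $z$ is adjacent to every other vertex and $\mathcal{P}^{\ast}(G)$ is connected. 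Thus $\mathcal{P}^{\ast}(G)$ is connected if and only if $G$ has a unique subgroup of order $p$.

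Finally, I would invoke the classical theorem on $p$-groups possessing a single subgroup of order $p$: such a group is cyclic when $p$ is odd, and cyclic or generalized quaternion when $p=2$ (consistently, in $Q_{4n}$ with $n$ a power of $2$ the element $y^2=x^n$ generates the only subgroup of order $2$, as already noted in the excerpt). Combining this classification with the equivalence just established yields the stated result. The only genuine obstacle is the component computation via edge-propagation; the remaining steps are routine bookkeeping, and the $p$-group classification can be quoted as a standard fact.
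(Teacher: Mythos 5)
Your argument is correct. Note that the paper itself offers no proof of this proposition---it is quoted verbatim from \cite{MRS-JAA} (Corollary 4.1 there)---so there is no in-paper argument to compare against; what you have written is essentially the standard proof. The key component computation is sound: for $a$ of order $p$, the set $C_a=\{x\in G\setminus\{e\}:\langle a\rangle\subseteq\langle x\rangle\}$ is a union of neighbours of $a$ together with $a$ itself, and your edge-propagation step correctly uses that a nontrivial subgroup of the cyclic $p$-group $\langle x\rangle$ must contain its unique subgroup of order $p$, which is $\langle a\rangle$ whenever $\langle a\rangle\subseteq\langle x\rangle$. This cleanly reduces connectedness of $\mathcal{P}^{\ast}(G)$ to $G$ having a unique subgroup of order $p$, and the final appeal to the classical classification of $p$-groups with a unique minimal subgroup (cyclic, or generalized quaternion when $p=2$) is a legitimately quotable standard fact. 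The only cosmetic remark is that you could shorten the component analysis: for disconnectedness it suffices to observe that distinct order-$p$ subgroups $\langle a\rangle$ and $\langle b\rangle$ give nonadjacent vertices $a,b$ each of which can only ever be joined to vertices $x$ with $\langle a\rangle\subseteq\langle x\rangle$ (respectively $\langle b\rangle\subseteq\langle x\rangle$), but the full identification of the component is harmless and makes the structure transparent.
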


For any finite group $G$, the identity element $e$ is adjacent to all other vertices of $\mathcal{P}(G)$. Thus $\mathcal{P}(G)$ is a connected graph with diameter at most 2 and hence $\kappa'(\mathcal{P}(G))=\delta(\mathcal{P}(G))$. The equality of $\kappa(\mathcal{P}(G))$ and $\delta(\mathcal{P}(G))$, however, need not hold in general: $\mathcal{P}(Q_{4n})$ is an example \cite[Theorem 6.9(ii)]{PK-CA}. Therefore, going forward, our objective will be to study this equality. By  \cite[Theorem 2.12]{CGS-2009}, $\mathcal{P}(G)$ is a complete graph if and only if $G$ is cyclic $p$-group for some prime $p$. In that case, $\kappa(\mathcal{P}(G)) =|G|-1= \delta(\mathcal{P}(G))$. The following proposition was proved in \cite[Theorem 6.2(ii)]{PK-CA} which gives a necessary condition for $\kappa(\mathcal{P}(G))=\delta(\mathcal{P}(G))$ when $G$ is not a cyclic $p$-group.

\begin{proposition}[\cite{PK-CA}]\label{EqualityNecessary}
Let $ G $ be a finite group different from a cyclic group of prime power order. Suppose that $\kappa(\mathcal{P}(G)) = \delta(\mathcal{P}(G))$. If $\delta(\mathcal{P}(G))=\deg(x)$ for some $x\in G$, then $o(x)=2$ and hence $G$ is of even order.
\end{proposition}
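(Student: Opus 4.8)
The plan is to show that once $\kappa(\mathcal{P}(G))=\delta(\mathcal{P}(G))$, a vertex of order at least $3$ cannot realize the minimum degree, by using such a vertex to manufacture a vertex cut of size $\delta(\mathcal{P}(G))-1$. Write $N(v)$ for the set of neighbors of a vertex $v$ in $\mathcal{P}(G)$. First I would record two easy reductions. Since $G$ is not a cyclic group of prime power order, $\mathcal{P}(G)$ is not complete, so $\delta(\mathcal{P}(G))\le |G|-2$. Also $e$ is adjacent to every other vertex, hence $\deg(e)=|G|-1>\delta(\mathcal{P}(G))$, so a minimum degree vertex $x$ satisfies $x\ne e$, i.e. $o(x)\ge 2$. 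It therefore suffices to rule out $o(x)\ge 3$.

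Assume for contradiction that $\deg(x)=\delta(\mathcal{P}(G))=\kappa(\mathcal{P}(G))$ with $n:=o(x)\ge 3$. Since a cyclic group of order $n\ge 3$ has at least two generators, I would choose a generator $x'$ of $\langle x\rangle$ with $x'\ne x$. The heart of the argument is that two generators of the same cyclic subgroup have essentially the same neighborhood (consistent with the remark that $\langle x\rangle=\langle x'\rangle$ forces $\deg(x)=\deg(x')$): using the adjacency criterion together with $\langle x\rangle=\langle x'\rangle$, for every $z\notin\{x,x'\}$ one has $z\in\langle x\rangle\Leftrightarrow z\in\langle x'\rangle$ and $x\in\langle z\rangle\Leftrightarrow x'\in\langle z\rangle$, so $z$ is adjacent to $x$ if and only if it is adjacent to $x'$. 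Since moreover $x$ and $x'$ are adjacent (as $x'\in\langle x\rangle$), writing $A$ for the set of common neighbors of $x$ and $x'$ gives $N(x)=\{x'\}\cup A$ and $N(x')=\{x\}\cup A$, whence $|A|=\deg(x)-1=\delta(\mathcal{P}(G))-1$.

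Now I would delete $A$ from $\mathcal{P}(G)$. After this deletion the only surviving neighbor of $x$ is $x'$ and the only surviving neighbor of $x'$ is $x$, so $\{x,x'\}$ is a connected component of the remaining graph. Because $\delta(\mathcal{P}(G))\le |G|-2$, the set $G\setminus(A\cup\{x,x'\})$ has at least $|G|-\delta(\mathcal{P}(G))-1\ge 1$ elements and is therefore nonempty, so deleting $A$ yields a disconnected graph. Hence $\kappa(\mathcal{P}(G))\le |A|=\delta(\mathcal{P}(G))-1$, contradicting $\kappa(\mathcal{P}(G))=\delta(\mathcal{P}(G))$. Thus $o(x)=2$; in particular $G$ has an element of order $2$, so $|G|$ is even.

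The only genuinely delicate point is the neighborhood identity $N(x')=(N(x)\setminus\{x'\})\cup\{x\}$, which is what makes $A$ an honest cut of size $\delta(\mathcal{P}(G))-1$. It is worth emphasizing that the construction breaks down precisely when $o(x)=2$, since then $x$ is the unique generator of $\langle x\rangle$ and no second generator $x'$ is available; this is exactly why order $2$ is the surviving possibility. Everything else is bookkeeping with the bound $\delta(\mathcal{P}(G))\le |G|-2$ and the definition of vertex connectivity.
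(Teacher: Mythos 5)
Your proof is correct, and since the paper only cites this result from [PK-CA, Theorem 6.2(ii)] without reproducing a proof, there is nothing in-text to diverge from; your argument is the standard one for this statement. The key observation that any two generators $x,x'$ of $\langle x\rangle$ satisfy $N(x)\setminus\{x'\}=N(x')\setminus\{x\}$, so that this common neighborhood is a vertex cut of size $\deg(x)-1$ isolating $\{x,x'\}$ (using non-completeness of $\mathcal{P}(G)$ to guarantee a nonempty remainder), is exactly the right mechanism, and your bookkeeping is sound.
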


For a finite cyclic group $G$, the following proposition was proved in  \cite[Theorem 6.7]{PK-CA} giving both necessary and sufficient conditions for $\kappa(\mathcal{P}(G))=\delta(\mathcal{P}(G))$.

\begin{proposition}[\cite{PK-CA}]\label{EqCyclic}
Let $G$ be a finite cyclic group. Then $\kappa(\mathcal{P}(G)) = \delta(\mathcal{P}(G))$ if and only if $|G|$ is a prime power or twice of a prime power.
\end{proposition}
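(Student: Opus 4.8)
The plan is to exploit the fact that, in a cyclic group, adjacency in the power graph is governed entirely by divisibility of orders. First I would record the elementary observation that for cyclic $G$ of order $n$, two elements $x,y$ are adjacent in $\mathcal{P}(G)$ if and only if $o(x)\mid o(y)$ or $o(y)\mid o(x)$; this holds because $G$ has a unique subgroup of each order, so $x\in\langle y\rangle$ is equivalent to $o(x)\mid o(y)$. Hence $\mathcal{P}(G)$ is the ``blow-up'' of the divisor lattice of $n$: the elements of a fixed order $d\mid n$ form a clique of size $\varphi(d)$ (Euler's totient), and the cliques of $d$ and $d'$ are completely joined precisely when $d\mid d'$ or $d'\mid d$. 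In particular the degree of an element of order $d$ is $\bigl(\sum\varphi(d')\bigr)-1$, summed over all divisors $d'$ of $n$ comparable to $d$ under divisibility; I will use this throughout. The case $n=p^a$ is immediate, since then $\mathcal{P}(G)$ is complete and $\kappa=\delta=n-1$.

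For sufficiency with $n=2p^a$ ($p$ odd), I would first locate the minimum degree. A short computation with the degree formula gives $\deg(z)=p^a$ for the unique involution $z$, while every other order yields a strictly larger degree; hence $\delta=p^a$ and, as always, $\kappa\le\delta=p^a$. The substance is the reverse inequality $\kappa\ge p^a$. Here I would use that the odd-order elements form a clique $K$ of size $p^a$ (the Sylow $p$-subgroup, whose power graph is complete) and the even-order elements form a clique $K'$ of size $p^a$, with a cross edge joining an element of order $p^i$ to one of order $2p^j$ exactly when $i\le j$. If $S$ is a set of fewer than $p^a$ vertices, then $K\setminus S$ and $K'\setminus S$ are nonempty and, being cliques, connected, so it suffices that one cross edge survive. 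But destroying all cross edges forces one to delete every odd class $p^i$ below some level $i_0$ and every even class $2p^j$ above some level $j_0$ with $i_0>j_0$; the number of deleted vertices is then $p^{\,i_0-1}+(p^a-p^{\,j_0})$, and $i_0-1\ge j_0$ shows this is at least $p^a$. Thus no cut of size $<p^a$ exists and $\kappa=p^a=\delta$.

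For necessity I would lean on Proposition~\ref{EqualityNecessary}: if $G$ is not a cyclic group of prime power order and $\kappa(\mathcal{P}(G))=\delta(\mathcal{P}(G))$, then every vertex of minimum degree has order $2$. So it suffices to show that when $n$ is neither a prime power nor twice a prime power, the minimum degree is \emph{not} attained at an element of order $2$; if $n$ is odd and not a prime power there is no such element and we are done. Assume $n$ is even and not of the form $2p^a$, and write $n=2^{a}n_{o}$ with $n_{o}$ odd. If $a\ge 2$ (so $n_{o}\ge 3$), comparing $\deg(z)=(2^{a}-1)n_{o}$ with the degree $2^{a-1}(n_{o}+1)-1$ of an element of order $2^{a}$ reduces to $2^{a-1}-1<n_{o}(2^{a-1}-1)$, which holds since $n_{o}>1$. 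If $a=1$, then $n_{o}$ has at least two distinct prime divisors; taking $p$ to be the smallest and writing $n_{o}=p^{b}r$ with $\gcd(r,p)=1$, comparing $\deg(z)=n_{o}$ with the degree $p+(p^{b}-1)r$ of an element of order $2p$ reduces to $p<r$, which holds because $r$ is a product of primes all exceeding $p$. In either case some element of order $\ne 2$ has smaller degree than $z$, so $z$ is not of minimum degree, and Proposition~\ref{EqualityNecessary} yields $\kappa<\delta$.

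The main obstacle is the lower bound $\kappa\ge p^a$ in the sufficiency direction: unlike $\delta$, the connectivity is a global quantity, so one must rule out every small separator, not merely the one isolating $z$. The two-clique picture above is exactly what makes this tractable, reducing the problem to the combinatorial statement that severing the odd clique from the even clique is as expensive as isolating the involution. The remaining work (the explicit degree formulas and the two inequalities in the necessity argument) consists of routine manipulations of $\sum_{d\mid m\mid n}\varphi(m)$ via multiplicativity.
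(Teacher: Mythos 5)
Your argument is correct, but note that the paper does not actually prove Proposition~\ref{EqCyclic}: it is imported verbatim from \cite{PK-CA} (Theorem~6.7 there), so there is no in-paper proof to compare against. What you have written is a valid self-contained derivation. The reduction of $\mathcal{P}(G)$ for cyclic $G$ to the blow-up of the divisor lattice is the right starting point; your degree computations check out ($\deg(z)=p^a$ for the involution when $n=2p^a$, and $(2^a-1)n_o$, $2^{a-1}(n_o+1)-1$, $p+(p^b-1)r$ in the necessity cases, with the two comparisons reducing to $(n_o-1)(2^{a-1}-1)>0$ and $r>p$ respectively). The one genuinely nontrivial step, $\kappa\geq p^a$, is handled soundly: with $|S|<p^a$ both cliques $K$ and $K'$ (each of size exactly $p^a$) retain a vertex, and severing all cross edges forces the deletion of at least $p^{i_0-1}+(p^a-p^{j_0})\geq p^a$ vertices since $i_0>j_0$. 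Your necessity argument leans on Proposition~\ref{EqualityNecessary} exactly as the present paper does elsewhere, and your degree formulas are consistent with what the equality case of Proposition~\ref{degprime} would give for cyclic $G$ (where every Sylow subgroup is cyclic and each element lies in the unique maximal cyclic subgroup $G$ itself), so your proof fits naturally into the paper's framework even though the authors chose to cite the result instead.
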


\subsection{Main result}

In this paper, we prove the following theorem which characterizes the equality of the vertex connectivity and the minimum degree in the power graphs of finite nilpotent groups.

\begin{theorem}\label{ThmEquality}
Let $G$ be a finite nilpotent group. Then $\kappa(\mathcal{P}(G)) = \delta(\mathcal{P}(G))$ if and only if one of the following holds:
\begin{enumerate}[\rm(1)]
	\item $G$ is cyclic group whose order is a prime power or twice of a prime power.
	\item $G$ is a noncyclic group of order $2^s p^{t}$, where $p$ is an odd prime, $s\geq 2$ and $t\geq 0$ are integers, satisfying the following:
	\begin{enumerate}[\rm(i)]
		\item The Sylow $2$-subgroup of $G$ is noncyclic and it contains a maximal cyclic subgroup of order $2$.
		\item The Sylow $p$-subgroup of $G$ is cyclic.
	\end{enumerate}
\end{enumerate}
\end{theorem}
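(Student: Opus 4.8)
The plan is to reduce everything to the Sylow decomposition of the nilpotent group, writing $G=P_2\times A$ with $A$ the odd-order Hall $2'$-subgroup, and to work throughout with two degree formulas. Since $x,y$ are adjacent iff one lies in the cyclic group generated by the other, setting $T_x=\{y\in G:x\in\langle y\rangle\}$ a short inclusion--exclusion gives $\deg(x)=o(x)-1+|T_x|-\phi(o(x))$, and in particular $\deg(x)=|T_x|$ when $o(x)=2$. Because cyclic subgroups of a direct product of coprime-order groups factor, every involution lies in $P_2$ and $\deg(t)=|A|\cdot|T^{P_2}_t|$ for an involution $t$, where $T^{P_2}_t$ is computed inside $P_2$. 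I would then set $m=\min\{|T^{P_2}_t|:t\in P_2,\ o(t)=2\}$, so that the minimum of $\deg$ over all involutions equals $|A|m$.

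For sufficiency, case~(1) is immediate from Proposition~\ref{EqCyclic}. In case~(2) we have $A=P_p$ cyclic of order $p^t$ and an involution $t_0$ with $\langle t_0\rangle\in\mathcal{M}(P_2)$ of order $2$, so $m=1$. First I would show $\delta(\mathcal{P}(G))=p^t$, attained at $t_0$: the degree formula gives $\deg(t_0)=p^t$, and a direct estimate (splitting on $o(x)=2^ap^b$) shows every vertex has degree at least $p^t$. The heart of the matter is $\kappa(\mathcal{P}(G))\ge p^t$. Here I would use that $P_p$ is a clique of size $p^t$ (being a cyclic $p$-group) and that every $v=(v_2,v_p)$ is adjacent to $(e,w_p)$ for each $w_p\in\langle v_p\rangle$ (as $(e,w_p)\in\langle v\rangle$) and to $(v_2,w_p)$ whenever $\langle v_p\rangle\subseteq\langle w_p\rangle$ (as then $v\in\langle(v_2,w_p)\rangle$), with $(v_2,w_p)$ in turn adjacent to $(e,w_p)$. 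Assuming a cut $S$ with $|S|\le p^t-1$ and a component $C_1$ disjoint from $P_p$, pick $v\in C_1$ (so $v_2\ne e$); since $e$ is adjacent to everything, $e\in S$, and since $v$ cannot touch $P_p$, recording the forced members of $S$ by their $P_p$-coordinate---namely $e$, the $p^b-1$ neighbours $(e,w_p)$ with $w_p\in\langle v_p\rangle\setminus\{e\}$, and for each of the $p^t-p^b$ elements $w_p$ of order exceeding $p^b=o(v_p)$ one of $(v_2,w_p),(e,w_p)$---gives $|S|\ge 1+(p^b-1)+(p^t-p^b)=p^t$, a contradiction. (The case $t=0$ is separate: then $\delta=1$ and $e$ is a cut-vertex by Proposition~\ref{proper-power}.)

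For necessity, assume $G$ noncyclic with $\kappa=\delta$. As $G$ is not a cyclic $p$-group, Proposition~\ref{EqualityNecessary} forces $G$ to have even order and every degree-minimizer to be an involution, so $\delta=|A|m$. The engine is the bound $m\ge 2^d-1$, where $M_2$ is a smallest maximal cyclic subgroup of $P_2$, of order $2^d$: any involution $t$ lies in some $M'\in\mathcal{M}(P_2)$, is its unique involution, so $T^{P_2}_t\supseteq M'\setminus\{e\}$ and $|T^{P_2}_t|\ge|M'|-1\ge 2^d-1$. With this I would rule out the bad structures by exhibiting a non-involution of degree $<\delta$ (impossible, since $\delta$ is the minimum and is attained only at involutions). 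Concretely: (i) if $A$ has a second prime, then for $a$ of smallest prime order $p$ the element $(t_0,a)$ has degree $p+m|T^A_a|<|A|m$, because $|A|-|T^A_a|$ is a multiple of a prime exceeding $p$; hence $A=P_p$. (ii) If $P_p$ were noncyclic, a smallest maximal cyclic subgroup of $G$ has order $2^dp^{e_0}$ with $e_0\le t-1$, and $2^d\le m+1$ forces $2^dp^{e_0}\le mp^t=\delta$, so its generator is a non-involution of degree $<\delta$; hence $P_p$ is cyclic of order $p^t$. (iii) With $P_p$ cyclic, the generator of $M_2\times P_p$ gives $2^dp^t-1\ge\delta=mp^t$, whence $m\le 2^d-1$ and so $m=2^d-1$; and if $d\ge2$ then $(c,e)$, with $c$ a generator of $M_2$, has degree $2^{d-1}(1+p^t)-1$, which is $\le\delta$ while $o(c)=2^d>2$, again impossible. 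Thus $d=1$, $m=1$, $P_2$ contains a maximal cyclic subgroup of order $2$, and $P_2$ is noncyclic (else $G$ is cyclic): this is exactly case~(2).

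I expect the main obstacle to be the connectivity lower bound $\kappa\ge p^t$ in the sufficiency direction: unlike the degree computations it is not a local calculation, and the naive attempt to route every vertex to $P_p$ through a single ``generator layer'' breaks down for vertices whose $p$-part has large order. The combined bookkeeping above---using simultaneously the direct neighbours $(e,w_p)\in\langle v\rangle$ and the lifted neighbours $(v_2,w_p)$---is what makes the count close to exactly $p^t$, and getting this accounting tight (rather than off by the factor $p^{t-1}$ that a cruder argument produces) is the crux of the whole proof.
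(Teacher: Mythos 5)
Your proposal is correct, and while the necessity direction is in the same spirit as the paper's, the sufficiency direction is genuinely different in one important respect. The paper does not prove the connectivity lower bound at all: it invokes Proposition \ref{to-use-1} (imported from \cite{CPS-2}) to get $\kappa(\mathcal{P}(G))=p^t$ outright, whereas you prove $\kappa(\mathcal{P}(G))\ge p^t$ from scratch by the cut-set bookkeeping on $P_p$-coordinates. I checked that argument: $P_p\setminus S$ is a clique lying in one component, any other component $C_1$ misses $P_p$, and for $v=(v_2,v_p)\in C_1$ with $o(v_p)=p^b$ the three families you list ($e$; the $p^b-1$ vertices $(e,w_p)$ with $w_p\in\langle v_p\rangle\setminus\{e\}$, which lie in $\langle v\rangle\cap P_p$ and hence in $S$; and for each of the $p^t-p^b$ elements $w_p$ of larger order one of $(v_2,w_p),(e,w_p)$) are pairwise distinct by their $P_p$-coordinates, giving $|S|\ge p^t$. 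This makes your proof self-contained where the paper's is not. For necessity, you and the paper both run degree comparisons, but organized differently: the paper first shows the minimum-degree involution $x$ generates a maximal cyclic subgroup of $P_1$ of order $2$ by a slick ``external neighbours of $y$ are neighbours of $x$'' argument, deduces $\delta=\prod_{i\ge2}p_i^{\alpha_i}$, forces all odd Sylows cyclic from $\deg(z)=o(z)-1\ge\delta$, and kills $r\ge3$ with Proposition \ref{to-use}; you instead set up the invariant $m$ with the bound $2^d-1\le m\le 2^d-1$ and eliminate the bad configurations by exhibiting explicit low-degree non-involutions ($(t_0,a)$, the generator of the smallest maximal cyclic subgroup, and $(c,e)$), which lets you exclude a second odd prime without first knowing the odd Sylows are cyclic. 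Your only under-argued step is the claim that every vertex of $G$ has degree at least $p^t$ in the sufficiency direction (the content of the paper's Propositions \ref{propcomp} and \ref{main-5}); it is true but requires the case analysis you only gesture at, so it should be written out.
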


We study the degrees of vertices of the power graphs of finite nilpotent groups in Section \ref{vertex-degrees} and then prove Theorem \ref{ThmEquality} in Section \ref{proof-main}.

\section{Vertex degrees}\label{vertex-degrees}

Throughout this section, $G$ is a finite nilpotent group of order $n$. We write the prime power factorization of $n$ as
$$n=p_1^{\alpha_1}p_2^{\alpha_2} \cdots p_r^{\alpha_r},$$
where $\alpha_1, \alpha_2, \ldots, \alpha_r$ are positive integers and $p_1,p_2,\dots, p_r$ are prime numbers with $p_1 < p_2 < \cdots < p_r$.
Denote by $P_i$ the unique Sylow $p_i$-subgroup of $G$ for $i\in [r]:=\{1,2,\dots,r\}$. Then $|P_i|=p_i^{\alpha_i}$ for $i\in [r]$ and $G=P_1P_2\cdots P_r$, the internal direct product of $P_1,P_2,\ldots, P_r$. For a given $x \in G$, there exists a unique element in $P_i$, denoted by $x_i$, for every $i\in [r]$ such that
$x = x_1x_2 \cdots x_r$.
Then $\langle x\rangle=\langle x_1\rangle \langle x_2\rangle\cdots \langle x_r\rangle$. Further, $\langle x\rangle\in\mathcal{M}(G)$ if and only if $\langle x_i\rangle\in\mathcal{M}(P_i)$ for every $i\in [r]$, see \cite[Lemma 2.11]{CPS-2}. Therefore, $\langle x\rangle\in\mathcal{M}(G)$ is of minimum order if and only if $\langle x_i\rangle\in\mathcal{M}(P_i)$ is of minimum order for every $i\in [r]$.

For $x\in G$, let $\tau_x$ denotes the subset of $[r]$ consisting of those elements $j\in [r]$ for which the $j$-th component $x_j$ of $x$ is nonidentity, that is,
$$\tau_x:=\{j\in [r]:\;x_j\neq e\}.$$
Note that $\tau_x=[r]$ if $\langle x\rangle\in\mathcal{M}(G)$ and that $\tau_x$ is empty if and only if $x=e$. We have $\deg(e)=|G|-1=n-1\geq\deg(x)$ for every $x\in G\setminus\{e\}$.
The following lemma helps us to compute the degrees of vertices of $\mathcal{P}(G)$.

\begin{lemma}\label{nbd-0}
For $x \in G\setminus\{e\}$, the sets
$A_x:=\left\{y \in G : y \neq x \text{ and } x \in \left\langle \underset{i\in \tau_x}\prod y_i \right\rangle\right \}$ and $B_x:=\{y \in G : y \neq x \text{ and } x \in \langle y \rangle \}$ are equal.
\end{lemma}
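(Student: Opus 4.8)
The plan is to remove the side condition $y\neq x$, which is identical in the two sets, and reduce the claim to the assertion that for every $y\in G$,
$$x\in\langle y\rangle \iff x\in\Big\langle \prod_{i\in\tau_x}y_i\Big\rangle.$$
The whole point is that membership of $x$ in a cyclic subgroup is detected Sylow-component by Sylow-component, and the components indexed outside $\tau_x$ impose no condition because there $x$ has trivial component.

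First I would record the componentwise membership criterion. Since $G=P_1P_2\cdots P_r$ is an internal direct product and the factors have pairwise coprime orders, the decomposition $\langle y\rangle=\langle y_1\rangle\langle y_2\rangle\cdots\langle y_r\rangle$ noted above is itself an internal direct product of the $\langle y_i\rangle$. By uniqueness of the Sylow decomposition, an element $g=g_1\cdots g_r$ lies in $\langle y\rangle$ if and only if $g_i\in\langle y_i\rangle$ for every $i\in[r]$. Applying this to $g=x$, and using that $x_i=e\in\langle y_i\rangle$ holds automatically for every $i\notin\tau_x$, I obtain that $x\in\langle y\rangle$ if and only if $x_i\in\langle y_i\rangle$ for every $i\in\tau_x$.

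Finally, set $z:=\prod_{i\in\tau_x}y_i$; its Sylow components are $z_i=y_i$ for $i\in\tau_x$ and $z_i=e$ otherwise, so $\langle z\rangle=\prod_{i\in\tau_x}\langle y_i\rangle$. Feeding $z$ into the same componentwise criterion shows that $x\in\langle z\rangle$ if and only if $x_i\in\langle y_i\rangle$ for every $i\in\tau_x$, which is precisely the condition just derived for $x\in\langle y\rangle$. Hence the two memberships are equivalent, giving $A_x=B_x$. The only genuinely nontrivial ingredient is the componentwise criterion, and that rests entirely on the coprimality of the Sylow orders (equivalently, nilpotency of $G$); once it is in place the rest is bookkeeping with $\tau_x$, so I expect no real obstacle beyond stating that criterion cleanly.
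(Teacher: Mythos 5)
Your proof is correct, and it packages the argument differently from the paper. You isolate a single structural fact --- that $\langle y\rangle=\langle y_1\rangle\cdots\langle y_r\rangle$ is an internal direct product of subgroups of the respective Sylow subgroups, so that $g\in\langle y\rangle$ if and only if $g_i\in\langle y_i\rangle$ for every $i$ --- and then both memberships $x\in\langle y\rangle$ and $x\in\bigl\langle \prod_{i\in\tau_x}y_i\bigr\rangle$ reduce to the same condition ``$x_i\in\langle y_i\rangle$ for all $i\in\tau_x$,'' since the components outside $\tau_x$ are trivial for $x$. The paper instead proves the two inclusions by separate explicit computations: for $A_x\subseteq B_x$ it manufactures an exponent $l=mq$ with $q=\prod_{j\notin\tau_x}o(y_j)$ coprime to $o(x)$ so that $\langle y^l\rangle=\langle x^q\rangle=\langle x\rangle$, and for $B_x\subseteq A_x$ it splits the equation $x=z^t$ across the two complementary subproducts $\prod_{i\in\tau_x}P_i$ and $\prod_{j\notin\tau_x}P_j$ and uses their trivial intersection. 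Both arguments ultimately rest on the coprimality of the Sylow orders; your version is more symmetric and arguably cleaner (one criterion applied twice), at the cost of having to state and justify the componentwise membership criterion, which you do correctly via uniqueness of the Sylow decomposition of an element. The paper's version is more elementary and self-contained but repeats the coprimality reasoning in two different guises.
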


\begin{proof}
Clearly, $A_x =B_x$ if $\tau_x =[r]$. So consider that $\tau_x\subsetneq [r]$. Let $y \in A_x$. Then $x = \left(\underset{i\in \tau_x}\prod y_i\right)^m$ for some integer $m$. Define the integers $q: = \underset{j\in [r]\setminus \tau_x}\prod o(y_j)$ and $l: = mq$. Then $y^l = \left(\underset{i\in \tau_x}\prod y_i\right)^l = x^{q}$. Since $q$ and $o(x)$ are relatively prime, we have $\langle x \rangle=\langle x^q \rangle=\langle y^l \rangle$. This implies that $x\in\langle y\rangle$ and so $y\in B_x$. Thus $A_x\subseteq B_x$.

Conversely, consider an element $z \in B_x$. Then $x = z^t=z_1^{t}z_2^{t}\cdots z_r^{t}$ for some integer $t$. This gives
\begin{equation}\label{eqn-1}
	x \left(\underset{i\in \tau_x}\prod z_i^{-t} \right)= \underset{j\in [r]\setminus \tau_x}\prod z_j^{t}.
\end{equation}
Since the left and right hand sides of (\ref{eqn-1}) are elements of $\underset{i\in \tau_x}\prod P_i$ and $\underset{j\in [r]\setminus \tau_x}\prod P_j$ respectively, it follows that $x \left(\underset{i\in \tau_x}\prod z_i^{-t}\right)= e$. So $x=\underset{i\in \tau_x}\prod z_i^{t}=\left(\underset{i\in \tau_x}\prod z_i\right)^{t}$ and hence $x\in \left\langle \underset{i\in \tau_x}\prod z_i \right\rangle$. This gives $z\in A_x$. Thus $B_x\subseteq A_x$.
\end{proof}

We denote by $\phi$ the Euler's totient function. Then $\phi(p^k)=p^{k-1}(p-1)=p^{k-1}\phi(p)$ for any prime $p$ and positive integer $k$, and $\underset{d\mid m}{\sum} \phi(d) = m$ for every positive integer $m$. Recall that $\phi$ is a multiplicative function, that is, $\phi(ab)=\phi(a)\phi(b)$ for any two positive integers $a,b$ which are relatively prime. So $\phi(n)= \phi\left(p_1^{\alpha_1}\right)\phi\left(p_2^{\alpha_2}\right) \cdots \left(p_r^{\alpha_r}\right)$. 
The following proposition gives an expression for the degree of a nonidentity vertex of $\mathcal{P}(G)$.

\begin{proposition}\label{rem}
For $x \in G\setminus \{e\}$, we have
$$\deg(x) = o(x) - \phi(o(x))  + \left(\underset{j\in [r]\setminus \tau_x}\prod p_j^{\alpha_j} \right)\times \left |\left\{y \in \underset{i\in \tau_x}\prod P_{i}: x \in \langle y \rangle \right\}\right |-1.$$
\end{proposition}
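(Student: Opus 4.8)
The plan is to count the neighbors of $x$ in $\mathcal{P}(G)$ directly and to organize the count by inclusion--exclusion. A vertex $y \neq x$ is adjacent to $x$ precisely when $y \in \langle x \rangle$ or $x \in \langle y \rangle$, so I would split the neighborhood of $x$ as the union of two sets: $C_x := \{y \in G : y \neq x,\ y \in \langle x \rangle\}$ and the set $B_x$ of Lemma \ref{nbd-0}. Since $\langle x \rangle$ has exactly $o(x)$ elements, $|C_x| = o(x) - 1$. A vertex lies in $C_x \cap B_x$ iff $y \in \langle x \rangle$ and $x \in \langle y \rangle$, that is, iff $\langle y \rangle = \langle x \rangle$ with $y \neq x$; such $y$ are exactly the generators of $\langle x \rangle$ other than $x$ itself, so $|C_x \cap B_x| = \phi(o(x)) - 1$. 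Inclusion--exclusion then gives $\deg(x) = |C_x \cup B_x| = (o(x)-1) + |B_x| - (\phi(o(x)) - 1) = o(x) - \phi(o(x)) + |B_x|$.

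It remains to evaluate $|B_x|$, and here I would invoke Lemma \ref{nbd-0} to replace $B_x$ by the set $A_x$, whose defining condition $x \in \langle \prod_{i\in\tau_x} y_i \rangle$ depends only on the components $y_i$ with $i \in \tau_x$. I would count $A_x$ by fibering over the partial product $w := \prod_{i\in\tau_x} y_i \in \prod_{i\in\tau_x} P_i$: for each admissible $w$ (one with $x \in \langle w \rangle$), the components $y_i$ for $i \in \tau_x$ are forced to coincide with those of $w$, while the components $y_j$ for $j \in [r]\setminus\tau_x$ may be chosen freely in $P_j$. This yields exactly $\prod_{j\in[r]\setminus\tau_x} p_j^{\alpha_j}$ elements $y \in G$ per admissible $w$, so the number of $y \in G$ (now allowing $y = x$) satisfying the membership condition equals $\left(\prod_{j\in[r]\setminus\tau_x} p_j^{\alpha_j}\right) \cdot \left|\{w \in \prod_{i\in\tau_x} P_i : x \in \langle w \rangle\}\right|$. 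Since $x = \prod_{i\in\tau_x} x_i$ itself satisfies $x \in \langle x \rangle$, the element $y = x$ is counted exactly once in this total, and subtracting it gives $|B_x| = |A_x|$. Substituting into the expression for $\deg(x)$ produces the claimed formula.

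The computation is essentially bookkeeping, so the point requiring the most care is tracking the three separate $-1$ corrections and not conflating them: one comes from excluding $y = x$ in $C_x$, one from excluding $x$ among the common generators counted in $C_x \cap B_x$, and one from excluding $y = x$ in the fibered count of $A_x$. After the inclusion--exclusion cancellation the first two merge with the leading $o(x)$ into the $o(x) - \phi(o(x))$ terms, while the third survives as the final $-1$ in the stated formula. The only structural input beyond this is the fibering step, which relies on $G = P_1 P_2 \cdots P_r$ being an internal direct product, so that each $y$ is determined independently by its components $(y_1, \ldots, y_r)$; this is exactly what makes the free-choice count multiplicative and justifies pulling out the factor $\prod_{j\in[r]\setminus\tau_x} p_j^{\alpha_j}$.
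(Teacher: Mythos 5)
Your proof is correct and follows essentially the same route as the paper: split the neighborhood of $x$ into elements of $\langle x\rangle$ and the set $B_x$, invoke Lemma \ref{nbd-0} to pass to $A_x$, and count $A_x$ by fibering over $\prod_{i\in\tau_x}P_i$ using the direct product structure. The only cosmetic difference is that the paper takes $C_x$ to be the nongenerators of $\langle x\rangle$ so the two pieces are disjoint, whereas you allow overlap and apply inclusion--exclusion; the arithmetic is identical.
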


\begin{proof}
Let $A_x$ and $B_x$ be the subsets of $G$ as defined in Lemma \ref{nbd-0}, and let $C_x$ be the set of all nongenerators of the cyclic subgroup $\langle x\rangle$ of $G$. Note that the neighbourhood of $x$ in $\mathcal{P}(G)$ is a disjoint union of $B_x$ and $C_x$. So $\deg(x)=|B_x|+|C_x|=|A_x|+|C_x|$, where the last equality holds by Lemma \ref{nbd-0}. We have $|C_x| = o(x) - \phi(o(x))$, and
\begin{align*}
	|A_x| &= \left|\left\{y \in G : y \neq x \text{ and } x \in \left\langle \underset{i\in \tau_x}\prod y_i \right\rangle\right \}\right|\\
	& = \left|\left\{y \in G : x \in \left\langle \underset{i\in \tau_x}\prod y_i \right\rangle\right \}\right| - 1 \nonumber \\
	& = \left(\underset{j\in [r]\setminus \tau_x}\prod p_j^{\alpha_j} \right) \times \left |\left\{y \in \underset{i\in \tau_x}\prod P_{i}: x \in \langle y \rangle \right\}\right |-1.
\end{align*}
Consequently, the result follows.
\end{proof}

We now compute a lower bound for the degrees of nonidentity vertices of $\mathcal{P}(G)$ and also provide necessary and sufficient conditions in order to attain that bound.

\begin{proposition}\label{degprime}
Consider $x \in G\setminus\{e\}$ and $M\in\mathcal{M}(G)$ containing $x$. If $|M|=p_{1}^{\gamma_{1}}p_{2}^{\gamma_{2}} \cdots p_{r}^{\gamma_{r}}$ and $o(x)=\underset{i\in \tau_x}\prod p_i^{\beta_i}$, where $1\leq \gamma_j\leq \alpha_j$ for $j\in [r]$ and $1\leq \beta_i\leq \gamma_i$ for $i\in \tau_x$, then
\begin{equation}\label{eqthm}
	\deg(x) \geq  o(x) - \phi(o(x))
	+ \left( \underset{j\in[r]\setminus \tau_x} \prod p_j^{\alpha_j} \right) \left(\underset{i\in \tau_x}\prod \left(p_{i}^{\gamma_{i}} - p_{i}^{\beta_i-1}\right)\right)  - 1.
\end{equation}
Further, equality holds in (\ref{eqthm}) if and only if $x$ belongs to exactly one maximal cyclic subgroup of $\underset{i\in \tau_x}\prod P_{i}$, namely $\underset{i\in \tau_x}\prod M_{i}$, where $M_i:=M\cap P_i\in \mathcal{M}(P_i)$ for $i\in [r]$.
\end{proposition}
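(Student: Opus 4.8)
The plan is to start from the exact degree formula of Proposition \ref{rem} and to estimate the cardinality
$$N := \left|\left\{y \in \prod_{i\in\tau_x} P_i : x\in\langle y\rangle\right\}\right|,$$
since the remaining terms $o(x)-\phi(o(x))$, the factor $\prod_{j\in[r]\setminus\tau_x}p_j^{\alpha_j}$ and the constant $-1$ are already determined. Write $H:=\prod_{i\in\tau_x}P_i$. The first step is to use the coprime direct product structure of $H$: for $y=\prod_{i\in\tau_x}y_i\in H$ one has $\langle y\rangle=\prod_{i\in\tau_x}\langle y_i\rangle$, and since $x=\prod_{i\in\tau_x}x_i$ with $x_i\in P_i$, the containment $x\in\langle y\rangle$ holds if and only if $x_i\in\langle y_i\rangle$ for every $i\in\tau_x$. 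Hence $N$ factors as $N=\prod_{i\in\tau_x}n_i$, where $n_i:=\left|\{y_i\in P_i:x_i\in\langle y_i\rangle\}\right|$.

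Next I would bound each $n_i$ from below. For $i\in\tau_x$ the component $x_i$ has order $p_i^{\beta_i}$ and lies in the cyclic group $M_i=M\cap P_i\in\mathcal{M}(P_i)$ of order $p_i^{\gamma_i}$. Restricting to $y_i\in M_i$ already suffices: in the cyclic $p_i$-group $M_i$ the subgroups are linearly ordered, so $x_i\in\langle y_i\rangle$ is equivalent to $\langle x_i\rangle\leq\langle y_i\rangle$, that is, to $o(y_i)\geq p_i^{\beta_i}$. The elements of $M_i$ of order at most $p_i^{\beta_i-1}$ are precisely those of the unique subgroup of order $p_i^{\beta_i-1}$, so there are $p_i^{\beta_i-1}$ of them; taking complements gives exactly $p_i^{\gamma_i}-p_i^{\beta_i-1}$ elements $y_i\in M_i$ with $x_i\in\langle y_i\rangle$. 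As $M_i\subseteq P_i$, this yields $n_i\geq p_i^{\gamma_i}-p_i^{\beta_i-1}$. Multiplying over $i\in\tau_x$ and substituting into Proposition \ref{rem} produces the bound (\ref{eqthm}).

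For the equality statement, observe that equality in (\ref{eqthm}) holds precisely when the lower bound is attained in each factor, i.e. $n_i=p_i^{\gamma_i}-p_i^{\beta_i-1}$ for every $i\in\tau_x$. I would show this attainment is equivalent to $M_i$ being the unique maximal cyclic subgroup of $P_i$ containing $x_i$. If $M_i$ is the only such subgroup, then every $y_i$ with $x_i\in\langle y_i\rangle$ lies in some maximal cyclic subgroup containing $x_i$, hence in $M_i$, so $n_i$ equals the count of the previous paragraph. Conversely, if a maximal cyclic subgroup $L_i\neq M_i$ also contains $x_i$, then a generator $g$ of $L_i$ satisfies $x_i\in\langle g\rangle$ while $g\notin M_i$ (else $L_i=\langle g\rangle\subseteq M_i$ forces $L_i=M_i$), giving at least one element beyond those in $M_i$ and hence the strict inequality $n_i>p_i^{\gamma_i}-p_i^{\beta_i-1}$. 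Finally, using the description of the maximal cyclic subgroups of $H$ as products of members of the $\mathcal{M}(P_i)$, the componentwise condition is equivalent to $\prod_{i\in\tau_x}M_i$ being the unique maximal cyclic subgroup of $H$ containing $x$, which is the asserted equality condition.

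The main obstacle I anticipate is the converse direction of the equality characterization: a priori one might fear that the elements $y_i$ contributed by several distinct maximal cyclic subgroups containing $x_i$ overlap enough to keep $n_i$ at its minimum value, which would require a careful inclusion--exclusion over their intersections. The clean way to avoid this is the explicit witness above, namely a generator of a second maximal cyclic subgroup, which is guaranteed to lie outside $M_i$ and so immediately forces $n_i$ strictly above the bound.
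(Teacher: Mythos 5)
Your proposal is correct and follows essentially the same route as the paper: both reduce via Proposition \ref{rem} to counting the elements $y\in\prod_{i\in\tau_x}P_i$ with $x\in\langle y\rangle$, obtain the exact value $\prod_{i\in\tau_x}\left(p_i^{\gamma_i}-p_i^{\beta_i-1}\right)$ by restricting to $\prod_{i\in\tau_x}M_i$, and characterize equality by uniqueness of the maximal cyclic subgroup of $\prod_{i\in\tau_x}P_i$ containing $x$. Your componentwise factorization of the count and the explicit generator witness in the equality case merely spell out details the paper leaves terse (it performs the same count via a totient sum), so the two arguments agree in substance.
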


\begin{proof}
We have $M = M_1 M_2 \cdots M_r$ and $x \in \underset{i\in \tau_x}\prod M_{i}\subseteq \underset{i\in \tau_x}\prod P_{i}$. In order to prove (\ref{eqthm}), it is enough to show using Proposition \ref{rem} that
\begin{equation}\label{eqthm-1}
	\left |\left\{y \in \underset{i\in \tau_x}\prod P_{i}: x \in \langle y \rangle \right\}\right |\geq \underset{i\in \tau_x}\prod \left(p_{i}^{\gamma_{i}} - p_{i}^{\beta_i-1}\right).
\end{equation}
Let $\tau_x=\{n_1,n_2,\ldots, n_s\}$ for some $s\in [r]$. Then
\begin{align}\label{eqthm-2}
	& \left |\left\{y \in \underset{i\in \tau_x} \prod M_{i}  : x \in \langle y \rangle \right\}\right |   = \sum_{\lambda_1 = \beta_{n_1}}^{\gamma_{n_1}} \sum_{\lambda_2 = \beta_{n_2}}^{\gamma_{n_2}} \cdots \sum_{\lambda_s = \beta_{n_s}}^{\gamma_{n_s}} \phi\left(p_{n_1}^{\lambda_{1}}p_{n_2}^{\lambda_{2}} \cdots p_{n_s}^{\lambda_{s}}\right) \nonumber\\
	& = \left(\sum_{\lambda_1 = \beta_{n_1}}^{\gamma_{n_1}} \phi\left(p_{n_1}^{\lambda_{1}}\right)\right) \left( \sum_{\lambda_2 = \beta_{n_2}}^{\gamma_{n_2}} \phi\left(p_{n_2}^{\lambda_{2}}\right)\right) \cdots \left(\sum_{\lambda_s = \beta_{n_s}}^{\gamma_{n_s}} \phi\left(p_{n_s}^{\lambda_{s}}\right)\right) \nonumber\\
	& = \left(p_{n_1}^{\gamma_{n_1}} - p_{n_1}^{\beta_{n_1}-1}\right)\left(p_{n_2}^{\gamma_{n_2}} - p_{n_2}^{\beta_{n_2}-1}\right)\cdots \left(p_{n_s}^{\gamma_{n_s}} - p_{n_s}^{\beta_{n_s}-1}\right)\nonumber\\
	& = \underset{i\in \tau_x}\prod \left(p_{i}^{\gamma_{i}} - p_{i}^{\beta_i-1}\right).
\end{align}
Now (\ref{eqthm-1}) follows from (\ref{eqthm-2}) and the fact that $\left\{y \in \underset{i\in \tau_x}\prod M_{i}: x \in \langle y \rangle \right\}\subseteq \left\{y \in \underset{i\in \tau_x}\prod P_{i}: x \in \langle y \rangle \right\}$. Next, we can see that equality holds in (\ref{eqthm}) if and only if the sets $\left\{y \in \underset{i\in \tau_x}\prod M_{i}: x \in \langle y \rangle \right\}$ and $\left\{y \in \underset{i\in \tau_x}\prod P_{i}: x \in \langle y \rangle \right\}$ are equal. The latter holds if and only if $\underset{i\in \tau_x}\prod M_{i}$ is the only maximal cyclic subgroup of $\underset{i\in \tau_x}\prod P_{i}$ containing $x$.
\end{proof}

The following proposition determines certain vertices of minimum degree among the vertices contained in a Sylow subgroup of $G$. 

\begin{proposition}\label{mindeg_in_pi}
For $k\in [r]$, suppose that $\langle y_k \rangle \in \mathcal{M}(P_k)$ is of minimum order. Then $\deg(y_k)\leq \deg(x_k)$ in $\mathcal{P}(G)$ for every $x_k\in P_k$.
\end{proposition}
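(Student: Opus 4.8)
The plan is to reduce everything to the single Sylow subgroup $P_k$ and then play the exact degree formula of Proposition~\ref{rem} against the lower bound of Proposition~\ref{degprime}. For any nonidentity $x_k\in P_k$ we have $\tau_{x_k}=\{k\}$, so writing $p:=p_k$ and $Q:=\underset{j\in[r]\setminus\{k\}}\prod p_j^{\alpha_j}=n/p_k^{\alpha_k}$, Proposition~\ref{rem} collapses to
\[\deg(x_k) = p^{\delta-1} + Q\left|\{y \in P_k : x_k \in \langle y\rangle\}\right| - 1,\]
where $p^{\delta}:=o(x_k)$ and we have used $o(x_k)-\phi(o(x_k))=p^{\delta-1}$. (The case $x_k=e$ is trivial, since $\deg(e)=n-1$ is the maximum degree.)

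First I would pin down $\deg(y_k)$ exactly. Since $\langle y_k\rangle$ is \emph{itself} maximal cyclic, any maximal cyclic subgroup of $P_k$ containing $y_k$ must contain, hence equal, $\langle y_k\rangle$; thus $y_k$ lies in exactly one maximal cyclic subgroup of $P_k$. The equality clause of Proposition~\ref{degprime} then applies with containing subgroup $M_k=\langle y_k\rangle$ and, setting $p^{\beta}:=o(y_k)$ (the minimum order of a maximal cyclic subgroup of $P_k$), gives
\[\deg(y_k) = p^{\beta-1} + Q\left(p^\beta - p^{\beta-1}\right) - 1.\]

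Next, for an arbitrary $x_k\neq e$ with $o(x_k)=p^{\delta}$, I would choose a maximal cyclic subgroup $M^{(k)}\in\mathcal{M}(P_k)$ containing $x_k$, of order $p^{\gamma}$. Minimality of $p^{\beta}$ forces $\gamma\geq\beta$, and clearly $\gamma\geq\delta\geq 1$. Extending $M^{(k)}$ to a maximal cyclic subgroup of $G$ by choosing the remaining components arbitrarily and applying the inequality of Proposition~\ref{degprime} yields
\[\deg(x_k) \geq p^{\delta-1} + Q\left(p^\gamma - p^{\delta-1}\right) - 1 =: L.\]
Everything then comes down to showing $L\geq\deg(y_k)$, and a short computation rewrites the difference as
\[L - \deg(y_k) = (1-Q)\left(p^{\delta-1} - p^{\beta-1}\right) + Q\left(p^\gamma - p^\beta\right).\]

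The main obstacle is that this difference is not manifestly nonnegative: for $Q\geq 2$ the first summand is negative whenever $\delta>\beta$, so one cannot simply discard a term, and the monotonicities in $\delta$ and in $\gamma$ pull in opposite directions. I expect to resolve this by splitting on whether $\delta<\beta$ or $\delta\geq\beta$. When $\delta<\beta$, both summands are individually nonnegative: $1-Q\leq 0$ and $p^{\delta-1}-p^{\beta-1}\leq 0$ make the first one nonnegative, while $\gamma\geq\beta$ handles the second. When $\delta\geq\beta$, I would absorb the negative term using $\gamma\geq\delta$ together with the identity $p^\gamma-p^\beta\geq p^\delta-p^\beta=p\left(p^{\delta-1}-p^{\beta-1}\right)$, which collapses the difference to $\left(p^{\delta-1}-p^{\beta-1}\right)\left(1+Q(p-1)\right)\geq 0$. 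In either case $\deg(x_k)\geq L\geq\deg(y_k)$, which is exactly the assertion.
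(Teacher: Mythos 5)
Your proof is correct and follows essentially the same route as the paper: both apply the inequality of Proposition \ref{degprime} to $x_k$ and its equality case to generators of maximal cyclic subgroups of $P_k$, then compare the resulting expressions. The only difference is organizational --- the paper interpolates through $\deg(z_k)$ for a generator $z_k$ of a maximal cyclic subgroup of $P_k$ containing $x_k$, so that each of the two successive differences factors as a manifestly nonnegative product, which avoids your case split on $\delta$ versus $\beta$.
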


\begin{proof}
Let $x_k \in P_k\setminus \{e\}$. Consider $ \langle z_k \rangle \in \mathcal{M}(P_k)$ containing $x_k$. Put $o(x_k)=p_k^\beta$ and $o(z_k)=p_k^\gamma$ for some positive integers $\beta\leq \gamma$. Applying Proposition \ref{degprime}, that is, inequality (\ref{eqthm}) for $\deg(x_k)$ and equality case for $\deg(z_k)$, we get
\begin{align}
	\deg(x_k) - \deg(z_k) & \geq p_k^{\beta-1} - p_k^{\gamma-1} +  \left( \underset{i\in[r]\setminus\{k\}}\prod p_i^{\alpha_i} \right)\left(p_k^{\gamma-1} - p_k^{\beta-1}\right) \nonumber\\
	& = \left( \underset{i\in[r]\setminus\{k\}}\prod p_i^{\alpha_i} -1 \right)\left(p_k^{\gamma-1} - p_k^{\beta-1}\right) \geq 0,\nonumber
\end{align}
giving $\deg(z_k)\leq \deg(x_k)$. Put $o(y_k)=p_k^{\lambda}$ for some positive integer $\lambda$. Since $o(y_k) \leq o(z_k)$, we have $\lambda\leq \gamma$. Applying the equality case of Proposition \ref{degprime} again, we get
$$\deg(z_k) - \deg(y_k) = \left(p_k^{\gamma-1} - p_k^{\lambda-1}\right) \left[(p_k -1) \left( \underset{i\in[r]\setminus\{k\}}\prod p_i^{\alpha_i}\right) +1 \right] \geq 0,$$
giving $\deg(y_k)\leq \deg(z_k)$. Hence $\deg(y_k)\leq \deg(x_k)$.
\end{proof}

Under certain conditions, the following proposition gives a strict upper bound for $\delta(\mathcal{P}(G))$ when $G$ is of even order.

\begin{proposition}\label{to-use}
Let $G$ be of even order. Suppose that the Sylow $2$-subgroup of $G$ is noncyclic containing a maximal cyclic subgroup of order $2$ and that all other Sylow subgroups of $G$ are cyclic. If $r\geq 3$, then
$\delta(\mathcal{P}(G))< \underset{i=2}{\overset{r}\prod} p_i^{\alpha_i}.$
\end{proposition}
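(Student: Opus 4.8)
The plan is to exhibit a single explicit vertex whose degree is strictly below $\prod_{i=2}^r p_i^{\alpha_i}$; since $\delta(\mathcal{P}(G))$ is the minimum of $\deg$ over all vertices, this suffices. The first thing to record is that the obvious candidate is \emph{not} good enough. Since the order is even we have $p_1=2$, and the hypothesis gives an order-$2$ element $y_1$ with $\langle y_1\rangle\in\mathcal{M}(P_1)$. As $\langle y_1\rangle$ is itself maximal cyclic, $y_1$ lies in exactly one maximal cyclic subgroup of $P_1$, so the equality case of Proposition \ref{degprime} (with $\tau_{y_1}=\{1\}$, $o(y_1)=2$, $\gamma_1=\beta_1=1$) gives $\deg(y_1)=\prod_{i=2}^r p_i^{\alpha_i}$ exactly, which only \emph{meets} the target and does not beat it. Moreover, by Proposition \ref{mindeg_in_pi}, $y_1$ already realises the minimum degree among all vertices lying inside $P_1$, so no vertex of $P_1$ will do. The key idea is therefore to attach to $y_1$ a generator of one cyclic Sylow subgroup: enlarging $\tau_x$ from $\{1\}$ to $\{1,2\}$ shrinks the dominant factor $\prod_{j\in[r]\setminus\tau_x}p_j^{\alpha_j}$ from $\prod_{i\geq 2}p_i^{\alpha_i}$ down to $\prod_{i\geq 3}p_i^{\alpha_i}$, and the resulting net change turns out to be negative precisely when $r\geq 3$.

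Concretely, using that each $P_i$ with $i\geq 2$ is cyclic, I would fix a generator $g_2$ of $P_2$ and set $x=y_1g_2$, so $\tau_x=\{1,2\}$ and $o(x)=2p_2^{\alpha_2}$. The maximal cyclic subgroup of $G$ through $x$ is $M=\langle y_1\rangle P_2P_3\cdots P_r$, with $M\cap P_1=\langle y_1\rangle$ and $M\cap P_i=P_i$ for $i\geq 2$; hence in the notation of Proposition \ref{degprime}, $\gamma_1=\beta_1=1$ and $\gamma_2=\beta_2=\alpha_2$. The crucial structural verification is that the equality case of Proposition \ref{degprime} applies, i.e.\ $x$ lies in exactly one maximal cyclic subgroup of $P_1P_2$: this holds because $\langle y_1\rangle$ is the only maximal cyclic subgroup of $P_1$ containing $y_1$ (it is itself maximal cyclic), and $P_2$ is the only maximal cyclic subgroup of the cyclic group $P_2$. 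With equality secured, Proposition \ref{degprime} yields the exact value
\begin{equation*}
\deg(x)=\bigl(p_2^{\alpha_2}+p_2^{\alpha_2-1}\bigr)+\Bigl(\textstyle\prod_{i=3}^r p_i^{\alpha_i}\Bigr)\bigl(p_2^{\alpha_2}-p_2^{\alpha_2-1}\bigr)-1.
\end{equation*}

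It then remains to compare this with the target $N:=\prod_{i=2}^r p_i^{\alpha_i}=p_2^{\alpha_2}\,Q$, where $Q:=\prod_{i=3}^r p_i^{\alpha_i}$. A one-line rearrangement gives $\deg(x)-N=p_2^{\alpha_2-1}(p_2+1-Q)-1$, so the entire argument collapses to the elementary inequality $Q\geq p_2+1$. This is exactly where $r\geq 3$ is indispensable: it guarantees that $Q$ is a nonempty product carrying the factor $p_3^{\alpha_3}$, whence $Q\geq p_3>p_2$ and so $Q\geq p_2+1$ (as $Q$ and $p_2$ are integers). Consequently $\deg(x)-N\leq -1<0$, giving $\delta(\mathcal{P}(G))\leq\deg(x)<N$. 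The point demanding care — and the only real obstacle — is not the computation but the choice of vertex: one must recognise that no vertex inside a single Sylow subgroup dips strictly below the bound, and that multiplying $y_1$ by a generator of the \emph{smallest} odd Sylow subgroup $P_2$ (so that the complementary product ranges over the \emph{larger} primes $p_3,\dots,p_r$) is precisely what upgrades the bound to a strict inequality.
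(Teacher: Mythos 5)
Your proof is correct and follows essentially the same route as the paper: both exhibit a witness vertex of the form $x_1x_k$ (an order-$2$ element generating a maximal cyclic subgroup of $P_1$ times a generator of one odd Sylow subgroup), evaluate its degree exactly via the equality case of Proposition \ref{degprime}, and compare with $\prod_{i=2}^{r} p_i^{\alpha_i}$. The only difference is the choice of $k$ and the closing inequality --- the paper takes $k$ with $p_k^{\alpha_k}$ minimal and uses a parity argument, while you take $k=2$ and use $Q\geq p_3>p_2$ --- and both choices work.
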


\begin{proof}
Since $n$ is even, we have $p_1=2$ and $P_1$ is the Sylow $2$-subgroup of $G$. Let $k\in\{2,3,\ldots,r\}$ be such that $$p_k^{\alpha_k}=\min\{p_i^{\alpha_i}:2\leq i\leq r\}.$$
Since $r\geq 3$, the set $[r]\setminus \{1,k\}$ is nonempty. We have $p_j^{\alpha_j} > p_k^{\alpha_k}$ for every $j\in [r]\setminus \{1,k\}$ as $p_j$ and $p_k$ are distinct primes and so
\begin{equation}\label{eqn-2}
	\left( \underset{j\in [r]\setminus\{1,k\}}\prod p_j^{\alpha_j} \right) p_k^{\alpha_k -1} - \left(p_k^{\alpha_k} + p_k^{\alpha_k -1}\right)>0.
\end{equation}
The strict inequality in (\ref{eqn-2}) holds as the first term is an odd integer and the second term is an even integer.

For $i\in\{2,3,\ldots,r\}$, each $P_i$ is cyclic by the given hypothesis. Consider a generator $x_k$ of $P_k$ and an element $x_1\in P_1$ such that $\langle x_1\rangle$ is a maximal cyclic subgroup of $P_1$ of order $2$. Then $o(x_1x_k)=2p_k^{\alpha_k}$ and $\langle x_1x_k\rangle$ is a maximal cyclic subgroup of $P_1P_k$. Any maximal cyclic subgroup of $G$ containing $x_1x_k$ is of order $2p_{2}^{\alpha_{2}} \cdots p_{k}^{\alpha_{k}}\cdots p_{r}^{\alpha_{r}}$. By the equality case of Proposition \ref{degprime}, we get
\begin{align*}
	\deg(x_1x_k) 
	& = p_k^{\alpha_k} + p_k^{\alpha_k -1} +  \left( \underset{j\in [r]\setminus\{1,k\}}\prod p_j^{\alpha_j} \right) \left(p_{k}^{\alpha_{k}} - p_{k}^{\alpha_k-1}\right) - 1.
\end{align*}
Then
\begin{align*}
	\underset{i=2}{\overset{r}\prod} p_i^{\alpha_i} - \deg(x_1x_k) & = \left( \underset{j\in [r]\setminus\{1,k\}}\prod p_j^{\alpha_j} \right) p_k^{\alpha_k -1} - \left(p_k^{\alpha_k} + p_k^{\alpha_k -1}\right) + 1 > 0,
\end{align*}
using (\ref{eqn-2}). Consequently, $\underset{i=2}{\overset{r}\prod} p_i^{\alpha_i} > \deg(x_1x_k)\geq \delta(\mathcal{P}(G))$.
\end{proof}

For $r=2$, under certain conditions, we next determine some vertices of $\mathcal{P}(G)$ attaining the minimum degree.

\begin{proposition}\label{propcomp}
Suppose that $r=2$, $P_1$ is noncyclic and $P_2$ is cyclic. Then the following hold for any $\langle y \rangle \in \mathcal{M}(G)$:
\begin{enumerate}[\rm(i)]
	\item $\deg(y) < \deg(y_2)$.
	\item If $x \in \langle y \rangle$ with $\tau_x=\{1,2\}$, then $\deg(x) > \deg(y_1)$. In particular, $\deg(y) >\deg(y_1)$.
\end{enumerate}
\end{proposition}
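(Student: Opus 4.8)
The plan is to prove both parts by direct computation using the equality case of Proposition \ref{degprime}, exploiting that $P_2$ is cyclic. Since $\langle y\rangle\in\mathcal{M}(G)$ and $r=2$, we have $\tau_y=\{1,2\}$, so $y=y_1y_2$ with $\langle y_1\rangle\in\mathcal{M}(P_1)$ and $\langle y_2\rangle\in\mathcal{M}(P_2)=\{P_2\}$ (as $P_2$ is cyclic, its unique maximal cyclic subgroup is $P_2$ itself). Write $o(y_1)=p_1^{\gamma_1}$ and $o(y_2)=p_2^{\alpha_2}$, where $\gamma_1\geq 1$. Because $y$ generates a maximal cyclic subgroup, the equality case of Proposition \ref{degprime} applies to $y$, and it also applies to $y_2$ (which lies in the single maximal cyclic subgroup $P_2$ of $P_2$) and to $y_1$ (contained in $\langle y_1\rangle\in\mathcal{M}(P_1)$; here one must check $y_1$ lies in exactly one maximal cyclic subgroup of $P_1$, which follows since $\langle y_1\rangle=M_1$ is itself maximal cyclic in $P_1$, so any maximal cyclic subgroup containing $y_1$ contains $\langle y_1\rangle$ and hence equals it).

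For part (i), I would write out $\deg(y)$ and $\deg(y_2)$ explicitly. Using the equality case with $\tau_y=\{1,2\}$ and $\gamma_i=\beta_i$ (since $y$ itself generates $\langle y\rangle$), one gets
\begin{align*}
\deg(y) &= o(y)-\phi(o(y)) + \bigl(p_1^{\gamma_1}-p_1^{\gamma_1-1}\bigr)\bigl(p_2^{\alpha_2}-p_2^{\alpha_2-1}\bigr)-1,\\
\deg(y_2) &= o(y_2)-\phi(o(y_2)) + p_1^{\alpha_1}\bigl(p_2^{\alpha_2}-p_2^{\alpha_2-1}\bigr)-1,
\end{align*}
where for $y_2$ we have $\tau_{y_2}=\{2\}$ so $[r]\setminus\tau_{y_2}=\{1\}$ contributes the factor $p_1^{\alpha_1}$. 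The difference $\deg(y_2)-\deg(y)$ then reduces to $\bigl(p_1^{\alpha_1}-(p_1^{\gamma_1}-p_1^{\gamma_1-1})\bigr)\bigl(p_2^{\alpha_2}-p_2^{\alpha_2-1}\bigr)$ after the $o(x)-\phi(o(x))$ terms are accounted for, and I would verify this factor is strictly positive. The key numerical input is that $P_1$ is noncyclic, forcing $\gamma_1<\alpha_1$ or at least $p_1^{\gamma_1}-p_1^{\gamma_1-1}<p_1^{\alpha_1}$; since $p_1^{\gamma_1}-p_1^{\gamma_1-1}<p_1^{\gamma_1}\leq p_1^{\alpha_1}$ always, and $p_2^{\alpha_2}-p_2^{\alpha_2-1}>0$, the strict inequality $\deg(y)<\deg(y_2)$ follows.

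For part (ii), take $x\in\langle y\rangle$ with $\tau_x=\{1,2\}$, so $o(x)=p_1^{\beta_1}p_2^{\beta_2}$ with $1\leq\beta_i\leq\gamma_i$ (here $\gamma_2=\alpha_2$). I would apply inequality (\ref{eqthm}) to $\deg(x)$ and the equality case to $\deg(y_1)$, where $y_1$ has $\tau_{y_1}=\{1\}$ giving the factor $p_2^{\alpha_2}$ from the complementary prime. Comparing $\deg(x)$ against $\deg(y_1)$, the dominant contribution comes from the factor $p_2^{\alpha_2}-p_2^{\alpha_2-1}$ multiplied by $p_1^{\beta_1-1}$ in the $x$-expression versus the factor $p_2^{\alpha_2}$ appearing for $y_1$ — the point is that having a genuine $p_2$-component in $x$ injects the large factor $\prod_{j\neq}p_j^{\alpha_j}$ structure favorably. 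The main obstacle, and the step I would treat most carefully, is bounding $\deg(x)$ from below by exactly the right quantity so that the difference $\deg(x)-\deg(y_1)$ is manifestly positive; since $x$ may lie in several maximal cyclic subgroups of $P_1\times P_2$, I can only use the lower bound (\ref{eqthm}) for $\deg(x)$, not equality, so I must ensure the inequality still goes the right way. The final clause ``$\deg(y)>\deg(y_1)$'' is the special case $x=y$ (for which $\tau_y=\{1,2\}$ holds automatically), so it follows immediately once the general claim is established.
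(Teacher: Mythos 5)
Your overall strategy---direct computation of all the relevant degrees via Proposition \ref{degprime}, using the equality case for $y$, $y_1$, $y_2$ and only the lower bound (\ref{eqthm}) for $x$---is exactly the paper's approach, and your justifications for when the equality case applies are correct. However, there are two genuine gaps. First, in part (i) your simplification of the difference is wrong: the terms $o(y_2)-\phi(o(y_2))=p_2^{\alpha_2-1}$ and $o(y)-\phi(o(y))$ do \emph{not} cancel, so $\deg(y_2)-\deg(y)$ does not reduce to $\bigl(p_1^{\alpha_1}-\phi(p_1^{\gamma_1})\bigr)\phi(p_2^{\alpha_2})$. The actual difference is
$$\deg(y_2)-\deg(y)=p_1^{\gamma_1}p_2^{\alpha_2-1}\left[p_1^{\alpha_1-\gamma_1}(p_2-1)-p_2\right]+p_2^{\alpha_2-1},$$
whose positivity genuinely requires $\gamma_1<\alpha_1$, i.e.\ the noncyclicity of $P_1$. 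Your fallback argument (``$p_1^{\gamma_1}-p_1^{\gamma_1-1}<p_1^{\gamma_1}\leq p_1^{\alpha_1}$ always'') proves too much: applied to $G=\mathbb{Z}_6$ with $y$ a generator and $y_2$ of order $3$, your formula gives a positive difference, whereas in fact $\deg(y_2)=4<5=\deg(y)$. So the bracketed negative contributions must be dominated using $\alpha_1-\gamma_1\geq 1$; the inequality is not automatic.

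Second, part (ii) is only a plan, not a proof. You correctly identify the crux---that $\deg(x)$ can only be bounded below by (\ref{eqthm}) while $\deg(y_1)$ is computed exactly, and that one must check the resulting difference is positive---but you explicitly defer that check (``the step I would treat most carefully''), and it is the entire mathematical content of the claim. The verification is not a formality: writing $o(x)=p_1^{\beta_1}p_2^{\beta_2}$, one gets
$$\deg(x)-\deg(y_1)\geq p_1^{\beta_1-1}p_2^{\beta_2-1}\left[p_2+\left(p_1^{\gamma_1-\beta_1}-1\right)\left(p_2^{\alpha_2-\beta_2+1}-p_1\right)\right]-p_1^{\gamma_1-1},$$
and positivity uses both $p_2>p_1$ (so $p_2^{\alpha_2-\beta_2+1}-p_1\geq 1$) and the factor $p_1^{\beta_1-1}p_2^{\beta_2-1}p_1^{\gamma_1-\beta_1}\geq p_1^{\gamma_1-1}$. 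Without carrying out this computation (or an equivalent one), the claim $\deg(x)>\deg(y_1)$ is unestablished.
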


\begin{proof}
Since $P_2$ is cyclic, we have $o(y_2)= p_2^{\alpha_2}$ and $o(y)= p_1^{\gamma_1}p_2^{\alpha_2}$ for some integer $\gamma_1$ with $1\leq \gamma_1\leq \alpha_1-1$. Then $o(y_1)= p_1^{\gamma_1}$ and $\deg(y)=o(y)-1=p_1^{\gamma_1}p_2^{\alpha_2}-1$.\\

\noindent (i) By the equality case of Proposition \ref{degprime}, we have
$\deg(y_2) =p_2^{\alpha_2 -1} + p_1^{\alpha_1}p_2^{\alpha_2 -1}\left(p_{2}- 1\right)  - 1$. Then
$$\deg(y_2)-\deg(y) =p_1^{\gamma_1}p_2^{\alpha_2 -1}\left[p_1^{\alpha_1-\gamma_1}(p_2 -1) - p_2 \right] + p_2^{\alpha_2 -1}>0,$$
as $\alpha_1 > \gamma_1$. Hence $\deg(y_2) > \deg(y)$.\\

\noindent (ii) Put $o(x) = p_1^{\beta_1}p_2^{\beta_2}$, where $1\leq \beta_1\leq \gamma_1$ and $1\leq \beta_2\leq \alpha_2$. By Proposition \ref{degprime}, we have	
\begin{align*}
	\deg(y_1) = p_1^{\gamma_1-1} + p_1^{\gamma_1}p_2^{\alpha_2}-p_1^{\gamma_1 -1}p_2^{\alpha_2}-1
\end{align*}
and
\begin{align*}
	\deg(x) & \geq  p_1^{\beta_1}p_2^{\beta_2} -  \phi\left(p_1^{\beta_1}p_2^{\beta_2}\right)+ \left(p_1^{\gamma_1}-p_1^{\beta_1-1}\right)\left(p_2^{\alpha_2}-p_2^{\beta_2-1}\right)-1\\
	& = p_1^{\beta_1-1}p_2^{\beta_2-1}\left[p_1+p_2 - p_1^{\gamma_1-\beta_1 +1}- p_2^{\alpha_2-\beta_2 +1} \right] + p_1^{\gamma_1}p_2^{\alpha_2} -1.
\end{align*}
Then
\begin{align*}
	& \deg(x) - \deg(y_1)\\
	& \geq p_1^{\beta_1-1}p_2^{\beta_2-1} \left[p_1+p_2 - p_1^{\gamma_1-\beta_1+1} -p_2^{\alpha_2-\beta_2+1} +  p_1^{\gamma_1-\beta_1}p_2^{\alpha_2-\beta_2+1} \right] - p_1^{\gamma_1-1}\\
	& = p_1^{\beta_1-1}p_2^{\beta_2-1} \left[p_2 +  \left(p_1^{\gamma_1-\beta_1}-1\right)\left(p_2^{\alpha_2-\beta_2+1}-p_1\right)\right] - p_1^{\gamma_1-1}\\
	& \geq p_1^{\beta_1-1}p_2^{\beta_2-1} \left[p_2 +  p_1^{\gamma_1-\beta_1}-1\right] - p_1^{\gamma_1-1} > 0.
\end{align*}
Hence $\deg(x) > \deg(y_1)$.
\end{proof}

\begin{proposition}\label{main-5}
Suppose that $r=2$, $P_1$ is noncyclic and $P_2$ is cyclic. If $\langle y\rangle\in\mathcal{M}(G)$ is of minimum order, then $\delta(\mathcal{P}(G)) = \deg(y_1)$.
\end{proposition}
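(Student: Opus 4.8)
The plan is to prove that $\deg(y_1)$ is a lower bound for the degree of every nonidentity vertex of $\mathcal{P}(G)$ and that this bound is attained at $y_1$ itself, so that $\delta(\mathcal{P}(G))=\deg(y_1)$. First I would record the structural consequences of the hypothesis. Since $\langle y\rangle\in\mathcal{M}(G)$ is of minimum order, the discussion at the start of Section~\ref{vertex-degrees} gives that $\langle y_1\rangle\in\mathcal{M}(P_1)$ is of minimum order; and since $P_2$ is cyclic, $\langle y_2\rangle=P_2$, so $o(y_2)=p_2^{\alpha_2}$. As $\deg(e)=n-1\geq\deg(x)$ for all $x$, it suffices to show $\deg(x)\geq\deg(y_1)$ for every $x\in G\setminus\{e\}$; since $y_1$ is itself a vertex with $\tau_{y_1}=\{1\}$, equality then holds and $\deg(y_1)=\delta(\mathcal{P}(G))$. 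Because $r=2$, I would split the argument according to the three possibilities $\tau_x=\{1\}$, $\tau_x=\{2\}$, and $\tau_x=\{1,2\}$.

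If $\tau_x=\{1\}$, then $x\in P_1$ and Proposition \ref{mindeg_in_pi} (with $k=1$) applies directly to give $\deg(y_1)\leq\deg(x)$. If $\tau_x=\{2\}$, then $x\in P_2$, and Proposition \ref{mindeg_in_pi} (with $k=2$, using that $\langle y_2\rangle=P_2$ is the unique, hence minimum-order, member of $\mathcal{M}(P_2)$) gives $\deg(x)\geq\deg(y_2)$. To finish this case I would invoke Proposition \ref{propcomp}: part (i) yields $\deg(y)<\deg(y_2)$ and part (ii) yields $\deg(y)>\deg(y_1)$, so chaining gives $\deg(y_2)>\deg(y)>\deg(y_1)$ and hence $\deg(x)>\deg(y_1)$. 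Finally, if $\tau_x=\{1,2\}$, I would pick $\langle z\rangle\in\mathcal{M}(G)$ containing $x$; then Proposition \ref{propcomp}(ii) applied to $\langle z\rangle$ gives $\deg(x)>\deg(z_1)$, while Proposition \ref{mindeg_in_pi} (with $k=1$, since $z_1\in P_1$) gives $\deg(z_1)\geq\deg(y_1)$, whence $\deg(x)>\deg(y_1)$.

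Combining the three cases shows $\deg(x)\geq\deg(y_1)$ for all $x\in G\setminus\{e\}$, with equality possible only when $x\in P_1$, and it is attained at $x=y_1$; this yields $\delta(\mathcal{P}(G))=\deg(y_1)$. The bulk of the work has already been packaged into Propositions \ref{mindeg_in_pi} and \ref{propcomp}, so essentially no new computation is required here. The step requiring the most care is the mixed case $\tau_x=\{1,2\}$: one must route the comparison through an auxiliary maximal cyclic subgroup $\langle z\rangle$ containing $x$ rather than through $\langle y\rangle$, because Proposition \ref{propcomp}(ii) compares $\deg(x)$ only with the first component of a maximal cyclic subgroup that actually contains $x$; the minimality of $\langle y_1\rangle$ is then what lets one compare $\deg(z_1)$ back to $\deg(y_1)$.
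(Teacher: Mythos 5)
Your proof is correct and follows essentially the same route as the paper: the same case split on $\tau_x$, with Proposition \ref{propcomp} handling the mixed case via an auxiliary $\langle z\rangle\in\mathcal{M}(G)$ containing $x$ and Proposition \ref{mindeg_in_pi} handling the one-component cases. The only (harmless) difference is that you deduce $\deg(z_1)\geq\deg(y_1)$ by citing Proposition \ref{mindeg_in_pi} with $k=1$, whereas the paper rederives it from the explicit degree formula in the equality case of Proposition \ref{degprime}; both are valid.
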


\begin{proof}
We have $|\langle y\rangle|=o(y) = p_1^{\gamma_1}p_2^{\alpha_2}$ for some integer $\gamma_1$ with $1\leq \gamma_1\leq \alpha_1-1$. Let $x\in G\setminus\{e\}$ be arbitrary. We show that $\deg(x)\geq \deg(y_1)$.

First assume that $|\tau_x|=2$. Consider $\langle z\rangle \in\mathcal{M}(G)$ containing $x$. Then $o(z_1)\geq o(y_1)$ and Proposition \ref{propcomp}(ii) implies that $\deg(x) > \deg(z_1)$. The equality case of Proposition \ref{degprime} gives that $\deg(y_1) = \dfrac{o(y_1)}{p_1} + \phi(o(y_1))p_2^{\alpha_2} - 1$ and $\deg(z_1) = \dfrac{o(z_1)}{p_1} + \phi(o(z_1))p_2^{\alpha_2} - 1$. Since $o(z_1)\geq o(y_1)$, it follows that $\deg(z_1) \geq \deg(y_1)$ and hence $\deg(x) > \deg(y_1)$. In particular, $\deg(y) >\deg(y_1)$.

Now assume that $|\tau_x|=1$. Then $x\in P_1$ or $x\in P_2$. So $\deg(x) \geq \min \{\deg(y_1), \deg(y_2)\}$ using Proposition \ref{mindeg_in_pi}. Since $\deg(y_2)>\deg (y)>\deg(y_1)$ by Proposition \ref{propcomp}, we get $\deg(x)\geq \min \{\deg(y_1), \deg(y_2)\}=\deg(y_1)$.
\end{proof}

\section{Proof of Theorem \ref{ThmEquality}}\label{proof-main}

The following proposition proves Theorem \ref{ThmEquality} for the power graphs of groups of prime power order.

\begin{proposition}\label{PropEqConMinDeg}
Let $G$ be a finite $p$-group, where $p$ is a prime. Then $\kappa(\mathcal{P}(G)) = \delta(\mathcal{P}(G))$ if and only if one of the following holds:
\begin{enumerate}[\rm(i)]
	\item $G$ is cyclic.
	\item $G$ is a noncyclic $2$-group containing a maximal cyclic subgroup of order $2$.
\end{enumerate}
\end{proposition}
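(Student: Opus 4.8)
\textbf{Proof proposal for Proposition \ref{PropEqConMinDeg}.}

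The plan is to split the argument into the two directions and handle the prime $p$ according to whether it is $2$ or odd. For the ``if'' direction, case (i) is immediate: if $G$ is a cyclic $p$-group, then $\mathcal{P}(G)$ is complete by \cite[Theorem 2.12]{CGS-2009}, so $\kappa(\mathcal{P}(G)) = |G|-1 = \delta(\mathcal{P}(G))$. For case (ii), suppose $G$ is a noncyclic $2$-group containing a maximal cyclic subgroup $\langle x\rangle$ of order $2$. I would first observe that $\deg(x) = 1$, since the only neighbour of an involution generating a maximal cyclic subgroup is the identity $e$. Thus $\delta(\mathcal{P}(G)) = 1$. Since $\mathcal{P}(G)$ is connected with at least two vertices, we always have $\kappa(\mathcal{P}(G)) \geq 1$, and combined with the general inequality $\kappa(\mathcal{P}(G)) \leq \delta(\mathcal{P}(G)) = 1$ this forces $\kappa(\mathcal{P}(G)) = 1 = \delta(\mathcal{P}(G))$.

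For the ``only if'' direction, suppose $\kappa(\mathcal{P}(G)) = \delta(\mathcal{P}(G))$ and that $G$ is not cyclic; I must show $G$ is a $2$-group with a maximal cyclic subgroup of order $2$. Since $G$ is a noncyclic $p$-group, it is in particular not a cyclic group of prime power order, so Proposition \ref{EqualityNecessary} applies: any vertex $x$ achieving the minimum degree satisfies $o(x) = 2$, whence $G$ has even order and therefore $p = 2$. This settles that $G$ is a noncyclic $2$-group. It remains to produce a maximal cyclic subgroup of order $2$. Here I would again use that the minimum-degree vertex $x$ has $o(x) = 2$, and analyze $\deg(x)$ via Proposition \ref{rem} (with $r = 1$, so $\tau_x = \{1\}$ and the product over $[r]\setminus\tau_x$ is empty): the degree of $x$ counts the nonidentity elements $y$ of $G$ with $x \in \langle y\rangle$ together with the nongenerators of $\langle x\rangle$. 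If $x$ were contained in some cyclic subgroup of order $4$ (i.e. $\langle x\rangle$ were not maximal), then $\deg(x)$ would exceed $1$; the goal is to show that the minimality of $\deg(x)$ forces $\langle x\rangle$ to be maximal, so that $G$ contains a maximal cyclic subgroup of order $2$.

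The main obstacle is the last step: translating ``$x$ has minimum degree and $o(x)=2$'' into ``$G$ has a maximal cyclic subgroup of order $2$.'' I expect the cleanest route is contrapositive. If every involution of $G$ lies in a cyclic subgroup of order at least $4$ (equivalently, $G$ has no maximal cyclic subgroup of order $2$), I would argue that every order-$2$ element $x$ has $\deg(x) \geq 2$, and more to the point that $\delta(\mathcal{P}(G))$ is attained and strictly exceeds the value it would take if such a maximal cyclic subgroup existed; alternatively, one can invoke Proposition \ref{mindeg_in_pi} (with $r=1$), which identifies a generator $y$ of a \emph{minimum-order} maximal cyclic subgroup as a minimum-degree vertex, so that $\delta(\mathcal{P}(G)) = \deg(y) = o(y) - 1$. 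Combining this with Proposition \ref{EqualityNecessary}, which forces the minimum degree to be realized by an element of order $2$, yields $o(y) - 1 = \deg(y) = \delta(\mathcal{P}(G)) = \deg(x) = o(x) - 1 = 1$, so $o(y) = 2$; thus the minimum-order maximal cyclic subgroup $\langle y\rangle$ has order $2$, which is exactly the desired conclusion. This identification of the minimum-degree vertex with a generator of a smallest maximal cyclic subgroup is the crux, and it is precisely what Proposition \ref{mindeg_in_pi} supplies.
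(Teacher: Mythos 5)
Your proposal is correct, and the ``only if'' direction takes a genuinely different route from the paper's. The paper rules out the generalized quaternion case via \cite[Theorem 6.9]{PK-CA}, then invokes Proposition \ref{proper-power} to conclude that $\mathcal{P}^{\ast}(G)$ is disconnected, so that $e$ is a cut-vertex, $\kappa(\mathcal{P}(G))=1$, hence $\deg(x)=1$ and $\langle x\rangle$ is forced to be maximal of order $2$. You instead apply Proposition \ref{mindeg_in_pi} with $r=1$ to identify a generator $y$ of a minimum-order maximal cyclic subgroup as a vertex realizing $\delta(\mathcal{P}(G))$, and then feed $y$ back into Proposition \ref{EqualityNecessary} to get $o(y)=2$ directly. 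This avoids both the external generalized-quaternion result and the proper-power-graph connectivity result, at the cost of leaning on the degree machinery of Section \ref{vertex-degrees}; the paper's route has the side benefit of computing $\kappa(\mathcal{P}(G))=1$ explicitly, which your argument never needs. Two small points: in the ``if'' direction your derivation of $\kappa=1$ from $\kappa\leq\delta=1$ and connectedness is a legitimate simplification of the paper's cut-vertex observation; and in your final chain of equalities the step $\deg(x)=o(x)-1$ is not justified for an arbitrary minimum-degree vertex $x$ (it presumes $\langle x\rangle$ maximal), but it is also unnecessary, since applying Proposition \ref{EqualityNecessary} to $y$ alone already yields $o(y)=2$ and $\deg(y)=o(y)-1=1$ because $\langle y\rangle\in\mathcal{M}(G)$.
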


\begin{proof}
If $G$ is a cyclic $p$-group, then $\mathcal{P}(G)$ is complete and so $\kappa(\mathcal{P}(G)) =|G|-1= \delta(\mathcal{P}(G))$. If $G$ is a noncyclic $2$-group containing a maximal cyclic subgroup $\langle x\rangle$ of order $2$ for some $x\in G$, then the facts that $\deg(x)=1$ and the identity element $e$ is a cut-vertex of $\mathcal{P}(G)$ imply $\kappa(\mathcal{P}(G)) =1= \delta(\mathcal{P}(G))$.

Conversely, assume that $\kappa(\mathcal{P}(G))=\delta(\mathcal{P}(G))$. We may consider that $G$ is noncyclic. Then Proposition \ref{EqualityNecessary} implies that $G$ is a $2$-group and $\delta(\mathcal{P}(G))=\deg(x)$ for some element $x\in G$ of order $2$. By \cite[Theorem 6.9]{PK-CA}, $G$ is not a generalized quaternion group. Then the proper power graph $\mathcal{P}^{\ast}(G)$ is disconnected by Proposition \ref{proper-power}. It follows that $e$ is a cut-vertex of $\mathcal{P}(G)$, implying $\kappa(\mathcal{P}(G))=1$. Thus $\deg(x)=\delta(\mathcal{P}(G))=\kappa(\mathcal{P}(G))=1$. Then it follows that $\langle x\rangle$ is a maximal cyclic subgroup $G$ of order $2$.
\end{proof}

We need the following result whose proof is same as that of \cite[Proposition 4.2]{CPS-2}. It was originally proved for noncyclic abelian groups.

\begin{proposition}[\cite{CPS-2}]\label{to-use-1}
Let $G$ be a noncyclic nilpotent group of order $2^s p^t$, where $p$ is an odd prime and $s,t$ are positive integers. If the Sylow $2$-subgroup of $G$ is neither cyclic nor generalized quaternion, and the Sylow $p$-subgroup of $G$ is cyclic, then $\kappa(\mathcal{P}(G))=p^t$.
\end{proposition}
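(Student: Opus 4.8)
The plan is to prove the two inequalities $\kappa(\mathcal{P}(G))\le p^t$ and $\kappa(\mathcal{P}(G))\ge p^t$ separately. Writing $G=P_1P_2$ with $P_1$ the Sylow $2$-subgroup (of order $2^s$, noncyclic and not generalized quaternion) and $P_2=\langle a\rangle$ the cyclic Sylow $p$-subgroup (of order $p^t$), the structural fact I would isolate first is that, because $\mathcal{M}(P_2)=\{P_2\}$, every maximal cyclic subgroup of $G$ has the form $\langle c\rangle\times P_2$ with $\langle c\rangle\in\mathcal{M}(P_1)$; in particular $P_2$ is contained in \emph{every} maximal cyclic subgroup of $G$, and $P_2$ is a clique of $\mathcal{P}(G)$ of size $p^t$. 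Since only the lattice of cyclic subgroups and the decomposition $x=x_1x_2$ enter the argument, commutativity of $P_1$ is never used, which is exactly why the abelian proof of \cite[Proposition 4.2]{CPS-2} carries over to the nilpotent case.

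For the upper bound I would delete the $p^t$ vertices of $P_2$ and show the result is disconnected. On $G\setminus P_2=\{g:g_1\ne e\}$, consider the projection $g\mapsto g_1\in P_1\setminus\{e\}$. If $g$ and $h$ are adjacent in $\mathcal{P}(G)\setminus P_2$, say $g\in\langle h\rangle$, then $g_1\in\langle h_1\rangle$ with $g_1,h_1\ne e$, so $g_1$ and $h_1$ lie in the same component of $\mathcal{P}^\ast(P_1)$. Hence each connected component of $\mathcal{P}(G)\setminus P_2$ projects into a single component of $\mathcal{P}^\ast(P_1)$. Because $P_1$ is noncyclic and not generalized quaternion, $\mathcal{P}^\ast(P_1)$ is disconnected by Proposition \ref{proper-power}; choosing $u,v\in P_1\setminus\{e\}$ in different components of $\mathcal{P}^\ast(P_1)$ gives two vertices of $G\setminus P_2$ in different components, so $\kappa(\mathcal{P}(G))\le p^t$.

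For the lower bound I would prove that deleting any set $S$ with $|S|\le p^t-1$ leaves $\mathcal{P}(G)$ connected, by exhibiting, for every $x\in G\setminus P_2$ with $x\ne e$, a fan of $p^t$ paths from $x$ to the $p^t$ distinct vertices of $P_2$ that are pairwise disjoint apart from $x$. Concretely, $x$ is adjacent to every $a^k\in\langle x_2\rangle$ directly, while for each $a^k\notin\langle x_2\rangle$ (so $\langle x_2\rangle\subsetneq\langle a^k\rangle$, the subgroups of the cyclic group $P_2$ being totally ordered) the vertex $x_1a^k$ satisfies $x\in\langle x_1a^k\rangle$ and $a^k\in\langle x_1a^k\rangle$, giving a path $x - x_1a^k - a^k$; the intermediate vertices $x_1a^k$ are distinct, have nontrivial $2$-part, and so are internally disjoint from one another and from the direct edges. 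Since $P_2\setminus S\ne\emptyset$ and $|S|<p^t$ cannot block all $p^t$ internally disjoint paths, every $x\in G\setminus S$ is joined to the surviving clique $P_2\setminus S$ inside $\mathcal{P}(G)\setminus S$, and connectivity of the whole graph follows.

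I expect the lower bound to be the main obstacle. The delicate point is to produce genuinely internally disjoint paths from an arbitrary $x$ to all of $P_2$ even when the $p$-part $x_2$ is small (the extreme case $x\in P_1$, where $x$ meets $P_2$ only in the identity). The construction above resolves this using that the subgroups of $P_2$ form a chain, so that $x_2\in\langle a^k\rangle$ whenever $a^k\notin\langle x_2\rangle$; carrying out the Menger/fan count to exactly $p^t$ is what pins the connectivity to $p^t$, rather than to the generally smaller number of generators of a single maximal cyclic subgroup.
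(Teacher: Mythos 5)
Your argument is correct. Note that the paper itself supplies no proof of this proposition: it only cites \cite[Proposition 4.2]{CPS-2} and remarks that the abelian proof carries over verbatim, so what you have written is in effect a reconstruction of that omitted argument, and it checks out. The upper bound is right: deleting the $p^t$ vertices of $P_2$ leaves exactly the elements with nontrivial $2$-part, adjacency there forces the (nontrivial) $2$-components to be adjacent or equal in $\mathcal{P}^{\ast}(P_1)$, and Proposition \ref{proper-power} gives the required disconnection since $P_1$ is neither cyclic nor generalized quaternion. The lower bound is also right: for $x\notin P_2$ the fan $x - x_1a^k - a^k$ (for $a^k\notin\langle x_2\rangle$) together with the direct edges to $\langle x_2\rangle$ does give $p^t$ paths to the clique $P_2$ meeting pairwise only in $x$, because the subgroups of the cyclic $p$-group $P_2$ form a chain, so $\langle x_2\rangle\subseteq\langle a^k\rangle$ and hence $x\in\langle x_1\rangle\langle a^k\rangle=\langle x_1a^k\rangle$; this handles the extreme case $x\in P_1\setminus\{e\}$ as you say. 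Your observation that only the cyclic subgroup lattice and the coprime direct decomposition are used, never commutativity of $P_1$, is precisely the justification the paper leaves implicit for extending the abelian result to nilpotent groups.
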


\begin{proof}[{\bf Proof of Theorem \ref{ThmEquality}}]
Let $G$ be a finite nilpotent group. If $G$ is cyclic with order a prime power or twice of a prime power, then $\kappa(\mathcal{P}(G))= \delta(\mathcal{P}(G))$ by Proposition \ref{EqCyclic}.
Suppose that $G$ is a noncyclic group of order $2^s p^{t}$, where $p$ is an odd prime, $s\geq 2$ and $t\geq 0$ are integers, in which the Sylow $2$-subgroup of $G$ is noncyclic containing a maximal cyclic subgroup of order $2$ and the Sylow $p$-subgroup of $G$ is cyclic. If $t=0$, then $\kappa(\mathcal{P}(G))= \delta(\mathcal{P}(G))$ by Proposition \ref{PropEqConMinDeg}. Assume that $t\geq 1$. The Sylow $2$-subgroup of $G$ is not generalized quaternion as it contains a maximal cyclic subgroup of order $2$. Therefore, $\kappa(\mathcal{P}(G))=p^{t}$ by Proposition \ref{to-use-1}. Let $\langle w_1\rangle$ be a maximal cyclic subgroup of order $2$ of the Sylow $2$-subgroup of $G$ and $w_2$ be a generator of the Sylow $p$-subgroup of $G$. Then $\langle w_1w_2\rangle$ is a maximal cyclic subgroup of $G$ of minimum order. We have $\delta(\mathcal{P}(G)) = \deg(w_1)$ by Proposition \ref{main-5}. Since $o(w_1)=2$, the equality case of Proposition \ref{degprime} gives that $\deg(w_1)=p^{t}$. Thus $\delta(\mathcal{P}(G))=p^{t}=\kappa(\mathcal{P}(G))$.

Conversely, assume that $\kappa(\mathcal{P}(G))=\delta(\mathcal{P}(G))$. Put $|G|=n$. By Proposition \ref{PropEqConMinDeg}, we may consider that $n$ is not a prime power. Write $n=p_1^{\alpha_1}p_2^{\alpha_2} \cdots p_r^{\alpha_r},$
where $r\geq 2$, $p_1,p_2,\dots, p_r$ are prime numbers with $p_1 < p_2 < \cdots < p_r$ and $\alpha_1, \alpha_2, \ldots, \alpha_r$ are positive integers. Let $P_i$ denote the Sylow $p_i$-subgroup of $G$ for $i\in [r]$.

Let $\delta(\mathcal{P}(G))= \deg(x)$ for some $x\in G$. Since $n$ is not a prime power, we have $o(x)=2$ by Proposition \ref{EqualityNecessary}. So $p_1=2$ and $G$ is of even order. Let $\langle y \rangle$ be a maximal cyclic subgroup of the Sylow $2$-subgroup $P_1$ of $G$ containing $x$. We have
$\deg(x)=\delta(\mathcal{P}(G)) \leq \deg(y).$
Since $\langle y\rangle$ is a cyclic $2$-group, the induced subgraph $\mathcal{P}(\langle y \rangle)$ of $\mathcal{P}(G)$ is complete and so $\deg_{\mathcal{P}(\langle y\rangle)}(x)= \deg_{\mathcal{P}(\langle y\rangle)}(y)$. Further, any element of $G\setminus\langle y\rangle$ that is adjacent to $y$ must be adjacent with $x$. It then follows that
$\deg(y)\leq  \deg(x).$
We thus have $\deg(y)=\deg(x)=\delta(\mathcal{P}(G))$. Then Proposition \ref{EqualityNecessary} again implies that $o(y)=2$ and hence $x=y$.
Thus $\langle x\rangle$ is a maximal cyclic subgroup of $P_1$ of order $2$. Then by the equality case of Proposition \ref{degprime}, we get
\begin{equation}\label{eqn-3}
	\delta(\mathcal{P}(G))=\deg(x)= \underset{i=2}{\overset{r}\prod} p_i^{\alpha_i}
\end{equation}
using the fact that $o(x)=2$.

Let $\langle z \rangle$ be a maximal cyclic subgroup of $G$ containing $x$. Then $\deg(z)=o(z)-1$ and $\langle z \rangle\cap P_1$ is a maximal cyclic subgroup of $P_1$ containing $x$. Replacing $\langle y\rangle$ by $\langle z \rangle\cap P_1$ in the previous paragraph, we get $\langle z \rangle\cap P_1=\langle x\rangle$.
We have $o(z) = 2p_2^{\gamma_2} \cdots p_r^{\gamma_r}$, where $1\leq \gamma_i\leq \alpha_i$ for $i\in\{2,3,\ldots, r\}$. Then
\begin{equation*}\label{EqEquality2}
	2p_2^{\gamma_2} \cdots p_r^{\gamma_r}> 2p_2^{\gamma_2} \cdots p_r^{\gamma_r} -1 =\deg(z)\geq \delta(\mathcal{P}(G))=\deg(x) =\underset{i=2}{\overset{r}\prod} p_i^{\alpha_i}
\end{equation*}
gives that
$\underset{i=2}{\overset{r}\prod} p_i^{\alpha_i - \gamma_i} < 2.$
This is possible if and only if $\alpha_i = \gamma_i$ for every $i\in \{2,3,\ldots,r\}$. It then follows that $P_2, P_3, \ldots, P_r$ are all cyclic.
If $P_1$ is also cyclic, then $G$ itself is cyclic. Consequently, $r=2$ and $|G|=n=2p_2^{\alpha_2}$ by Proposition \ref{EqCyclic}.

Now assume that $P_1$ is noncyclic. Then $G$ is noncyclic. If $r\geq 3$, then $\delta(\mathcal{P}(G))< \underset{i=2}{\overset{r}\prod} p_i^{\alpha_i}$ by Proposition \ref{to-use}, which contradicts (\ref{eqn-3}). Thus $r=2$ and $|G|=2^{\alpha_1}p_2^{\alpha_2}$, where the Sylow $2$-subgroup $P_1$ of $G$ is noncyclic containing a maximal cyclic subgroup $\langle x\rangle$ of order $2$ and the Sylow $p_2$-subgroup $P_2$ of $G$ is cyclic. This completes the proof.
\end{proof}

\section*{Acknowledgment}
The first author would like to thank the National Institute of Science Education and Research, Bhubaneswar, for the facilities provided when working as a visiting fellow in the School of Mathematical Sciences. The last author is partially supported by Project No. MTR/2017/000372 of the Science and Engineering Research Board (SERB), Department of Science and Technology, Government of India.





\vskip .5cm

\noindent\underline{\bf Addresses}:

\noindent{\bf Ramesh Prasad Panda ({\tt rppanda@niser.ac.in})\\
Kamal Lochan Patra ({\tt klpatra@niser.ac.in})\\
Binod Kumar Sahoo ({\tt bksahoo@niser.ac.in})}
\begin{enumerate}
\item[1)] School of Mathematical Sciences, National Institute of Science Education and Research, Bhubaneswar, P.O.-Jatni, District-Khurda, Odisha-752050, India.

\item[2)] Homi Bhabha National Institute, Training School Complex, Anushakti Nagar, Mumbai-400094, India.
\end{enumerate}
\end{document}